\journalname{Submitted}                    
\newcommand*{\id}{{\mathrm{id}}}
\newcommand*{\la}{{\langle}}                                                  
\newcommand*{\ra}{{\rangle}}                                                  
\newcommand*{\R}{{\mathbb R}}                                                 
\newcommand*{\C}{{\mathbb{C}}}
\newcommand*{\Hb}{{\mathbb H}}                                                
\newcommand*{\Vb}{{\mathbb V}}
\newcommand*{\Wb}{{\mathbb W}}                                                
\newcommand*{\Sb}{{\mathbb S}}                                                
\newcommand*{\Nb}{{\mathbb N}}                                                
\newcommand*{\Fb}{{\mathbb F}}                   
\newcommand*{\Ib}{{\mathbb I}}
\newcommand*{\Jf}{{\mathfrak J}}  
\newcommand*{\af}{{\mathfrak a}}  
\newcommand*{\qf}{{\mathfrak q}}  
\newcommand*{\pf}{{\mathfrak p}} 
\newcommand*{\ef}{{\mathrm e}} 
\newcommand*{\pa}{{\partial}}
\newcommand*{\rd}{{\mathrm d}}
\newcommand*{\Gr}{{\mathrm{Gr}}} 
\newcommand*{\GL}{{\mathrm{GL}}}
\newcommand*{\Sbc}{{\Sb^{\mathrm{o}}\!\!}}
\begin{document}

\title{Grassmannian flows and applications to nonlinear partial differential equations}
\author{Margaret Beck \and Anastasia Doikou \and
Simon~J.A.~Malham \and Ioannis Stylianidis}
\titlerunning{Grassmannian flows and nonlinear PDEs}                                        
\authorrunning{Beck \and Doikou \and Malham \and Stylianidis}      

\institute{Margaret Beck \at Department of Mathematics and Statistics,
Boston University, Boston MA~02215, USA \email{mabeck@bu.edu}
\and Anastasia Doikou, Simon~J.A.~Malham and Ioannis Stylianidis
\at Maxwell Institute for Mathematical Sciences,        
and School of Mathematical and Computer Sciences,   
Heriot-Watt University, Edinburgh EH14 4AS, UK 
\email{A.Doikou@hw.ac.uk, S.J.A.Malham@hw.ac.uk, is11@hw.ac.uk}}
%
\date{30th January 2018}

\maketitle

\abstract{We show how solutions to a large class of partial differential equations 
with nonlocal Riccati-type nonlinearities can be generated from the corresponding 
linearized equations, from arbitrary initial data. It is well known that evolutionary 
matrix Riccati equations can be generated by projecting linear evolutionary flows on 
a Stiefel manifold onto a coordinate chart of the underlying Grassmann manifold.
Our method relies on extending this idea to the infinite dimensional case.
The key is an integral equation analogous to the Marchenko equation in integrable systems,
that represents the coodinate chart map. 
We show explicitly how to generate such solutions to scalar partial differential equations 
of arbitrary order with nonlocal quadratic nonlinearities using our approach. 
We provide numerical simulations that demonstrate the generation 
of solutions to Fisher--Kolmogorov--Petrovskii--Piskunov 
equations with nonlocal nonlinearities. 
We also indicate how the method might extend to more 
general classes of nonlinear partial differential systems.}

\section{Introduction}\label{sec:intro}
It is well known that solutions to many integrable nonlinear 
partial differential equations can be generated from solutions to
a linear integrable equation namely the Gel'fand--Levitan--Marchenko equation. 
It is an example of a generic dressing transformation which we 
shall express in the form 
\begin{equation}\label{eq:GLM}
g(x,y)=p(x,y)-\int_x^\infty g(x,z)q^\prime(z,y;x)\,\mathrm{d}z,
\end{equation}
for $y\geqslant x$. See Zakharov and Shabat~\cite{ZS} or 
Dodd, Eilbeck, Gibbon and Morris~\cite{DEGM} for more details.
Here all the functions shown may depend explicitly on time $t$, and we
suppose that $q^\prime$ and $p$ represent given data and $g$ is the solution.
Typically $p$ represents the scattering data and takes the form $p=p(x+y)$
while $q^\prime$ depends on $p$, for example $q^\prime=-p$ in the case of the 
Korteweg de Vries equation. See Ablowitz, Ramani and Segur~\cite{ARSII}
for more details. Typically given a nonlinear integrable partial differential equation,
the function $p$ is the solution to an associated linear system and the solution to 
the nonlinear integrable equation is given by $u=-2(\mathrm{d}/\mathrm{d}x)g(x,x)$.
See for example Drazin and Johnson~\cite[p.~86]{DJ} for the case of the 
Korteweg de Vries equation. The notion that the solution to a corresponding linear 
partial differential equation can be used to generate solutions to 
nonlinear integrable partial differential equations is addressed
in the review by Miura~\cite{Miura}. An explicit formula 
was provided by Dyson~\cite{Dyson} who showed that for the Korteweg de Vries equation
the solution to the Gel'fand--Levitan--Marchenko equation along the diagonal $g=g(x,x)$
can be expressed in terms of the derivative of the logarithm of a tau-function 
or Fredholm determinant. In a series of papers P\"oppe~\cite{P-SG,P-KdV,P-KP}, 
P\"oppe and Sattinger~\cite{PS-KP}, Bauhardt and P\"oppe~\cite{BP-ZS},
and Tracy and Widom~\cite{TW} expressed the solutions to further 
nonlinear integrable partial differential equations in terms of Fredholm determinants.  
Importantly P\"oppe~\cite{P-SG} explicitly states the idea that:
\begin{quote}
``For every soliton equation, there exists a \emph{linear} PDE (called a 
base equation) such that a map can be defined mapping a solution $p$ of the 
base equation to a solution $u$ of the soliton equation. The properties
of the soliton equation may be deduced from the corresponding properties of the
base equation which in turn are quite simple due to linearity. The map
$p\to u$ essentially consists of constructing a set of linear integral operators
using $p$ and computing their Fredholm determinants.''
\end{quote}

From our perspective, the solution $g$ to the dressing transformation
represents an element of a Fredholm Grassmann manifold, 
expressed in a given coordinate patch. Let us briefly explain
this perspective here. This will also
help motivate the structures we introduce herein. Our original
interest in Grassmann manifolds arose in spectral problems associated with
$n$th order linear operators on the real line which can 
be expressed in the form
\begin{subequations}\label{eq:spectralproblem}
\begin{align}
\pa_t\qf&=A\qf+B\pf\\
\pa_t\pf&=C\qf+D\pf,
\end{align}
\end{subequations}
where $\qf=\qf(t)\in\C^k$ and $\pf=\pf(t)\in\C^{n-k}$,
with natural numbers $1\leqslant k<n$. 
In these equations $A=A(t)\in\C^{k\times k}$, 
$B=B(t)\in\C^{k\times(n-k)}$, $C=C(t)\in\C^{(n-k)\times k}$ and 
$D=D(t)\in\C^{(n-k)\times(n-k)}$ are linear matrix operators. 
We assume as given that the matrix consisting of the blocks 
$A$, $B$, $C$ and $D$ has rank $n$ for all $t\in\R$. 
For example, in the case of elliptical eigenvalue problems,  
we have $(n-k)=k$ and $A=O$, $B=I_k$ and $C$ contains
the potential function. Then, with $t\in\R$ representing a spatial coordinate, 
the equations above are the corresponding
first order representation of such an eigenvalue problem and the first equation 
corresponds to simply setting the variable $\pf$ to be the spatial derivative 
of $\qf$. The goal is to solve such eigenvalue problems by shooting. 
In such an approach the far-field boundary conditions, let's focus on 
the left far-field for the moment, naturally determine a subspace of 
solutions which decay to zero exponentially fast, though in general 
at different exponential rates. The choice of $k$ above hitherto was
arbitrary, now we retrospectively choose it to be the dimension of
this subspace of solutions decaying exponentially in the left far-field.
We emphasize it is the data, in this case the far-field data, that
determines the dimension $k$ of the subspace we consider.
In principle we can integrate $k$ solutions from the left far-field 
forward in $t$ thus generating a continuous set of $k$-frames evolving
with $t\in\R$. If $(\qf_1,\pf_1)^{\mathrm{\tiny T}},\ldots,(\qf_k,\pf_k)^{\mathrm{\tiny T}}$
represent the solutions to the linear system~\eqref{eq:spectralproblem} above
that make up the components of the $k$-frame, we can represent them by
\begin{equation*}
\begin{pmatrix} Q \\ P\end{pmatrix}\coloneqq
\begin{pmatrix}
\qf_1 & \cdots &\qf_k \\
\pf_1 & \cdots &\pf_k 
\end{pmatrix},
\end{equation*}
where $Q\in\C^{k\times k}$ and $P\in\C^{(n-k)\times k}$. From the linear
system~\eqref{eq:spectralproblem} for $\qf$ and $\pf$ above, the matrices 
$Q=Q(t)$ and $P=P(t)$ naturally satisfy the linear matrix system
\begin{subequations}\label{eq:QandPevolution}
\begin{align}
\pa_t Q&=AQ+BP\\
\pa_t P&=CQ+DP.
\end{align}
\end{subequations}
To determine eigenvalues, by matching with the right far-field
boundary conditions, the minimal information required is that for the subspace only 
and not the complete frame information. The Grassmann manifold $\Gr(\C^n,\C^k)$ 
is the set of $k$-dimensional subpsaces in $\C^n$. It is thus the natural context 
for the subspace evolution and then matching. See Alexander, Gardner 
and Jones~\cite{AGJ90} for a comprehensive account; they used the Pl\"ucker
coordinate representation for the Grassmannian $\Gr(\C^n,\C^k)$.  
In Ledoux, Malham and Th\"ummler~\cite{LMT}, Ledoux, Malham, Niesen 
and Th\"ummler~\cite{LMNT} and Beck and Malham~\cite{BM} we chose
instead to directly project onto a coordinate patch of the 
Grassmannian $\Gr(\C^n,\C^k)$. Assuming that the matrix $Q\in\C^{k\times k}$
has rank $k$, we can achieve this as follows. We consider 
the transformation of coordinates here given by $Q^{-1}$ that renders the 
first $k<n$ coordinates as orthonormal thus generating the matrix
\begin{equation*}
\begin{pmatrix} I_k \\ G\end{pmatrix},
\end{equation*}
where $G(t)=P(t)\,Q^{-1}(t)$ for all $t\in\R$. Note this includes the data,
i.e.\/ $G(-\infty)=P(-\infty)Q^{-1}(-\infty)$.
The key point which underlies the ideas in this
paper is that $G=G(t)\in\C^{(n-k)\times k}$ evolves according to the
evolutionary Riccati equation
\begin{equation}\label{eq:Riccati}
\pa_t G=C+DG-G\,(A+BG),
\end{equation}
where $A$, $B$, $C$ and $D$ are the block matrices from the linear
evolutionary system~\eqref{eq:spectralproblem} above. This equation
for $G$ is straightforwardly derived by direct computation. 
The Grassmannian $\Gr(\C^n,\C^k)$ is a homogeneous manifold. As such
there are many numerical advantages to integrating along it, here this
corresponds to computing $G=G(t)$. For instance, the aforementioned differing 
far-field exponential growth rates are projected out and $G$ provides 
a useful succinct paramaterization of the subspace. It provides 
the natural extension of shooting methods to higher order 
linear spectral problems on the real line; also see Deng and Jones~\cite{DJ-M}. 
Let us call the procedure of deriving the Riccati equation~\eqref{eq:Riccati}
from the linear equations~\eqref{eq:QandPevolution} the 
\emph{forward} problem.

However in this finite dimensional context let us now turn this 
question around. Suppose our goal now is to solve the quadratically 
nonlinear evolution equation~\eqref{eq:Riccati} above for some
given data $G(-\infty)=G_0$. We assume of course the block
matrices $A$, $B$, $C$ and $D$ have the properties described above.
Let us call this the \emph{inverse} problem. With the forward problem 
described above in mind, given such a nonlinear evolution equation to solve,
we might naturally assume the nonlinear evolution equation~\eqref{eq:Riccati}
resulted from the projection of a linear Stiefel manifold flow onto
a coordinate patch of the underlying Grassmann manifold. From this
perspective, since all we are given is $G$ or indeed just the data $G_0$,
we can naturally assume $G_0$ was the result of such a Stiefel to Grassmann
manifold projection. In particular we are free to assume that the 
transformation $Q$ underlying the projection had rank $k$, and 
indeed $Q(-\infty)$, was simply $I_k$ itself. Thus if we suppose we were given
data $Q(-\infty)=I_k$ and $P(-\infty)=G_0$ and that $Q\in\C^{k\times k}$ 
and $P\in\C^{(n-k)\times k}$ satisfied the linear evolutionary
equations~\eqref{eq:QandPevolution} above then indeed $G=PQ^{-1}$
would solve the nonlinear evolution equation~\eqref{eq:Riccati} above.
Note that in this process there is nothing special about the data 
being prescribed at $t=-\infty$, it could be prescribed at any finite
value of $t$, for example $t=0$. In summary, suppose we want 
to solve the nonlinear Riccati equation~\eqref{eq:Riccati} for some
given data $G(0)=G_0\in\C^{(n-k)\times k}$. Then if we suppose the matrices 
$Q\in\C^{k\times k}$ and $P\in\C^{(n-k)\times k}$ satisfy the linear
system of equations~\eqref{eq:QandPevolution} with 
$Q(0)=I_k$ and $P(0)=G_0$, then the solution $G\in\C^{(n-k)\times k}$
to the linear relation $P=G\,Q$ solves the Riccati equation~\eqref{eq:Riccati}
on some possibly small but non-zero interval of existence. 

Our goal herein is to extend the idea just outlined 
to the infinite dimensional setting. Hereafter we always
think of $t\in[0,\infty)$ as an evolutionary time variable.
The natural extension of the finite rank (matrix) operator
setting above to the infinite dimensional case is to 
pass over to the corresponding setting with compact operators.  
Thus formally, now suppose $Q=Q(t)$ and $P=P(t)$ are linear
operators satisfying the linear system of 
evolution equations~\eqref{eq:QandPevolution} for $t\geqslant0$.
We assume that $A$ and $C$ are bounded operators, while the
operators $B$ and $D$ may be bounded or unbounded. 
We suppose the solution operators $Q=Q(t)$ and $P=P(t)$
are such that for some $T>0$, we have $Q(t)-\id$ and $P(t)$
are Hilbert--Schmidt operators for $t\in[0,T]$. Thus over
this time interval $Q(t)$ is a Fredholm operator. If the 
operators $B$ and $D$ are bounded then we require that 
$P$ lies in the subset of the class of Hilbert--Schmidt operators
characterized by their domains. In addition we now suppose that
$P=P(t)$ and $Q=Q(t)$ are related through a Hilbert--Schmidt operator
$G=G(t)$ as follows
\begin{equation}\label{eq:Riccatirelation}
P=G\,Q.
\end{equation}
We suppose herein this is a Fredholm equation for $G$ and  
not of Volterra type like the dressing transformation above.
We will return to this issue in our concluding section. 
As in the matrix case above, if we differentiate this Fredholm
relation with respect to time using the product rule, insert
the evolution equations~\eqref{eq:QandPevolution} for $Q=Q(t)$ and $P=P(t)$,
and then post-compose by $Q^{-1}$, then we obtain the Riccati evolution
equation~\eqref{eq:Riccati} for the Hilbert--Schmidt operator $G=G(t)$.
We emphasize that, as for the matrix case above, for some time interval
of existence $[0,T]$ with $T>0$, we can generate the solution to 
the Riccati equation~\eqref{eq:Riccati} with given initial data $G(0)=G_0$
by solving the two linear evolution equations~\eqref{eq:QandPevolution} 
with the initial data $Q(0)=\id$ and $P(0)=G_0$ and then solving the
third linear integral equation~\eqref{eq:Riccatirelation}. This is the
inverse problem in the infinite dimensional setting.

We now address how these operator equations are related
to evolutionary partial differential equations. Can we 
use the approach above to find solutions to 
evolutionary partial differential equations with nonlocal
quadratic nonlinearities in terms of solutions to the corresponding 
linearized evolutionary partial differential equations?
Suppose that $\Vb$ is a closed linear subspace of 
$\Hb\coloneqq L^2(\R;\R)\times L^2_{\mathrm{d}}(\R;\R)$. Here
$L^2_{\mathrm{d}}(\R;\R)\subseteq L^2(\R;\R)$ represents 
the subspace of $L^2(\R;\R)$ corresponding to the intersection 
of the domains of the operators $B$ and $D$.
Suppose further that we have the direct sum decomposition 
$\Hb=\Vb\oplus\Vb^\perp$, where $\Vb^\perp$ represents the 
closed subspace of $\Hb$ orthogonal to $\Vb$. 
As already intimated, suppose for some $T>0$ that for $t\in[0,T]$
we know: (i) $Q=Q(t)$ is a Fredholm operator from $\Vb$ to $\Vb$
of the form $Q=\id+Q^\prime$ where $Q^\prime=Q^\prime(t)$ is
a Hilbert--Schmidt operator on $\Vb$; and (ii) $P=P(t)$ is 
a Hilbert--Schmidt operator from $\Vb$ to $\Vb^\perp$. 
As such for $t\in[0,T]$ there exist integral kernels 
$q^\prime=q^\prime(x,y;t)$ and $p=p(x,y;t)$ with $x,y\in\R$
representing the action of the operators $Q'(t)$ and $P(t)$,
respectively. Let us define the following nonlocal product
for any two functions $g,g^\prime\in L^2(\R^2;\R)$ by
\begin{equation*}
\bigl(g\star g^\prime\bigr)(x,y)\coloneqq\int_\R g(x,z)\,g^\prime(z,y)\,\rd z.
\end{equation*}
Suppose now that the unbounded operators $B$ and $D$ are now
explicitly constant coefficient polynomial functions of $\pa_x$;
let us denote them by $b=b(\pa_x)$ and $d=d(\pa_x)$. Further 
suppose $A$ and $C$ are bounded Hilbert--Schmidt operators 
which can be represented via their integral kernels, say
$a=a(x,y;t)$ and $c=c(x,y;t)$, respectively. If $g=g(x,y;t)$
represents the integral kernel corresponding to the 
Hilbert--Schmidt operator $G=G(t)$ then we observe that
the two linear evolutionary equations~\eqref{eq:QandPevolution} 
and linear integral equation~\eqref{eq:Riccatirelation} can
be expressed as follows. We have 
\begin{enumerate}
\item \textit{Base equation:} $\pa_tp=c\star(\delta+q^\prime)+d\,p$;
\item \textit{Aux. equation:} $\pa_tq=a\star(\delta+q^\prime)+b\,p$;
\item \textit{Riccati relation:} $p=g\star(\delta+q^\prime)$.
\end{enumerate}
Here $\delta$ is the Dirac delta function representing the identity at the level
of integral kernels. The evolutionary equation for $g=g(x,y;t)$ corresponding to 
the Riccati evolution equation~\eqref{eq:Riccati} takes the form 
\begin{equation*}
\pa_tg=c+d\,g-g\star(a+b\,g).
\end{equation*}
This is an evolutionary partial differential equation for $g=g(x,y;t)$
with a nonlocal quadratic nonlinearity `$g\star(b\,g)$'.
Explicitly it has the form
\begin{equation*}
\pa_tg(x,y;t)=c(x,y;t)+d(\pa_x)\,g(x,y;t)
-\!\int_\R g(x,z;t)\bigl(a(z,y;t)+b(\pa_z)\,g(z,y;t)\bigr)\,\rd z.
\end{equation*}
The reason underlying the nomination of the base and auxiliary equations 
above is that in most of our examples we have $a\equiv c\equiv0$---let
us assume this for the sake of our present argument. We have outlined
the forward problem identified earlier at the partial differential equation
level. However our goal is to solve the inverse problem at this level:
given a nonlinear evolutionary partial differential equation of
the form above with some arbitrary initial data $g(x,y;0)=g_0(x,y)$,
can we re-engineer solutions to it from solutions to the corresponding 
base and auxiliary equations? The answer is yes. Given the 
nonlinear evolutionary partial differential equation $\pa_tg=d\,g-g\star(b\,g)$
with $b=b(\pa_x)$ and $d=d(\pa_x)$ as described above, suppose we solve
the corresponding linear base equation for $p=p(x,y;t)$, 
which is the linearized version of the given equation and 
consequently solve the auxiliary equation for $q^\prime=q^\prime(x,y;t)$.
Then solutions $g=g(x,y;t)$ to the nonlinear evolutionary 
partial differential equation are re-engineered/generated
by solving the Riccati relation for $p$ and $q^\prime$ 
which is a linear Fredholm integral equation. 

We explicitly demonstrate this procedure through two examples. 
We consider two Fisher--Kolmogorov--Petrovskii--Piskunov type equations
with nonlocal nonlinearities. The first has a nonlocal nonlinear term 
of the form `$g\star g$' where the product `$\star$' represents the special
case of convolution. The second has a nonlocal nonlinear term 
of the form `$g\star(b\,g)$' where $b=b(x)$ is a multiplicative function
corresponding to a correlation in the nonlinearity. In this latter case
the product `$\star$' has the general form as originally defined above.
In both these cases we show how solutions can be generated using the 
approach we propose from arbitrary initial data. We provide 
numerical simulations to confirm this. From these examples 
we also see how our procedure extends straightforwardly to any 
higher order diffusion. We additionally show how Burgers' equation 
and its solution using the corresponding base equation via 
the Cole--Hopf transformation fits into the context we have described here.

We emphasize that, as is well known for the 
Gel'fand--Levitan--Marchenko equation above which is of Volterra type, 
the procedure we have outlined works for most integrable systems, 
as demonstrated in Ablowitz, Ramani and Segur~\cite{ARSII} who assume 
$p=p(x+y)$ is a Hankel kernel. For example, we can generate solutions 
to the Korteweg de Vries equation from the Gel'fand--Levitan--Marchenko 
equation by setting $q^\prime=-p$. As another example, we can generate 
solutions to the nonlinear Schr\"odinger equation
by assuming $q^\prime(z,y;x)=\pm\int_x^{\infty}\overline{p}(z,\zeta)\,p(\zeta,y)\,\rd\zeta$
where $\overline{p}$ represents the complex conjugate of $p$. In this case it is
also well known that such solutions can be generated from a 2$\times$2 matrix-valued
dressing transformation. See  Zakharov and Shabat~\cite{ZS},  
Dodd, Eilbeck, Gibbon and Morris~\cite{DEGM} or Drazin and Johnson~\cite{DJ}
for more details. Further, the connection between integrable systems 
and infinite dimensional Grassmann manifolds was first made by Sato~\cite{SatoI,SatoII}.
Lastly Riccati systems are a central feature of optimal control systems.
The solution to a matrix Riccati equation provides the optimal continuous
feedback in optimal linear-quadratic control theory. See for example 
Bittanti, Laub and Willems~\cite{BLW}, Brockett and Byrnes~\cite{BB},
Hermann~\cite{HpartA,HpartB}, Hermann and Martin~\cite{HM},
Martin and Hermann~\cite{MH} and Zelikin~\cite{Z} for more details.
A comprehensive list of the related control literature can also be found
in Ledoux, Malham and Th\"ummler~\cite{LMT}.

Our paper is structured as follows. In Section~\ref{sec:Grassmannians} 
we define and outline the Grassmann manifolds in finite and infinite
dimensions that we require to give the appropriate context to our procedure.
Then in Section~\ref{sec:flows} we 
show how linear subspace flows induce Riccati flows in coordinate patches of
the corresponding Fredholm Grassmannian. We derive the equation for the 
evolution of the integral kernel associated with the Riccati flow. 
We then consider two pertinent examples in Section~\ref{sec:examples}.
Their solutions can be derived by solving the linear base 
and auxiliary partial differential equations (the subspace flow) 
and then solving the linear Fredholm equation representing 
the projection of the subspace flow onto a coordinate patch of the 
Fredholm Grassmannian. Then finally in Section~\ref{sec:conclu} we 
discuss possible extensions of our approach to other 
nonlinear partial differential equations.

\section{Grassmann manifolds}\label{sec:Grassmannians} 
Grassmann manifolds underlie the structure, development and solution
of the differential equations we consider herein. Hence we
introduce them here first in the finite dimensional, and then second
in the infinite dimensional, setting. There are many perspectives and prescriptions,
we choose the prescriptive path that takes us most efficiently to the infinite
dimensional setting we require herein.

Suppose we have a finite dimensional vector space say $\Hb=\C^n$ of dimension $n\in\Nb$.
Given an integer $k$ with $1\leqslant k<n$, the Grassmann manifold $\Gr(\C^n,\C^k)$ is 
defined to be the set of $k$-dimensional linear subspaces of $\C^n$. 
Let $\{\mathrm{e}_j\}_{j\in\{1,\ldots,n\}}$ denote the \emph{canonical basis} for $\C^n$,
where $\mathrm{e}_j$ is the $\C^n$-valued vector with one in the $j$th entry and
zeros in all the other entries. Suppose we are given a set of $k$ linearly 
independent vectors in $\C^n$ and we record them in the following $n\times k$ matrix:
\begin{equation*}
W=\begin{pmatrix}
w_{1,1} & \cdots & w_{1,k}\\
\vdots &        & \vdots \\
w_{n,1} & \cdots & w_{n,k}
\end{pmatrix}.
\end{equation*}
Each column is one of the linear independent vectors in $\C^n$. 
This matrix has rank $k$.
Naturally the columns of $W$ span a $k$-dimensional subspace or 
$k$-plane $\Wb$ in $\C^n$. Let us denote by $\Vb_0$ the 
\emph{canonical subspace} given by 
$\text{span}\{\mathrm{e}_1,\ldots,\mathrm{e}_k\}$, i.e.\/ 
the subspace prescribed by the first $k$ canonical basis
vectors which has the representation
\begin{equation*}
W_0\coloneqq\begin{pmatrix}
I_k\\
O
\end{pmatrix}.
\end{equation*}
Here $I_k$ is the $k\times k$ identity matrix. 
The span of the vectors $\{\mathrm{e}_{k+1},\ldots,\mathrm{e}_n\}$
represents the subspace $\Vb_0^\perp$, the $(n-k)$-dimensional subspace
of $\C^n$ orthogonal to $\Vb_0$. Suppose we are able to
project $\Wb$ onto $\Vb_0$. Then the projections 
$\mathrm{pr}\colon\Wb\to\Vb_0$ and $\mathrm{pr}\colon\Wb\to\Vb_0^\perp$ 
respectively give 
\begin{equation*}
W^\parallel=\begin{pmatrix}
w_{1,1} & \cdots & w_{1,k}\\
\vdots &        & \vdots \\
w_{k,1} & \cdots & w_{k,k}\\
0      & \cdots & 0\\
\vdots &        & \vdots \\
0      & \cdots & 0
\end{pmatrix}
\qquad\text{and}\qquad
W^\perp=\begin{pmatrix}
0      & \cdots & 0\\
\vdots &        & \vdots \\
0      & \cdots & 0\\
w_{k+1,1} & \cdots & w_{k+1,k}\\
\vdots &        & \vdots \\
w_{n,1} & \cdots & w_{n,k}\\
\end{pmatrix}.
\end{equation*}
The existence of this projection presupposes that the rank
of the matrix $W^\parallel$ on the left above is $k$, i.e.\/ the determinant
of the upper $k\times k$ block say $W_{\text{up}}$ is non-zero. 
This is not always true, we account for this momentarily. 
The subspace given by the span of the columns of $W^\parallel$ naturally 
coincides with $\Vb_0$. Indeed since $W^\parallel$ has rank $k$, 
there exists a rank $k$ transformation from $\Vb_0\to\Vb_0$, 
given by $W_{\text{up}}^{-1}\in\GL(\C^k)$, that transforms $W^\parallel$ to $W_0$.
Thus what distinguishes $\Wb$ from $\Vb_0$ is the form of $W^\perp$.
Under the same transformation of coordinates $W_{\text{up}}^{-1}$,
the lower $(n-k)\times k$ matrix say $W_{\text{low}}$ of $W^\perp$ 
becomes the $(n-k)\times k$ matrix $G\coloneqq W_{\text{low}}W_{\text{up}}^{-1}$. 
Or in other words if we perform this transformation of coordinates, the matrix $W$
as a whole becomes 
\begin{equation}\label{eq:canonicalcoordpatch}
\begin{pmatrix}
I_k\\
G
\end{pmatrix}.
\end{equation}
Thus any $k$-dimensional subspace $\Wb$ of $\C^n$ which can be projected onto $\Vb_0$
can be represented by this matrix. Conversely any $n\times k$ matrix of 
this form represents a $k$-dimensional subspace $\Wb$ of $\C^n$ that can be projected
onto $\Vb_0$. The matrix $G$ thus paramaterizes all the $k$-dimensional subspaces $\Wb$ 
that can be projected onto $\Vb_0$. As $G$ varies, the orientation of the 
subspace $\Wb$ within $\C^n$ varies.

What about the $k$-dimensional subspaces in $\C^n$ that cannot be projected
onto $\Vb_0$? This occurs when one or more of the column vectors of $W$ are
parallel to one or more of the orthogonal basis vectors 
$\{\mathrm{e}_{k+1},\ldots,\mathrm{e}_n\}$. Such subspaces cannot be 
represented in the form \eqref{eq:canonicalcoordpatch} above. Any such
matrices $W$ are rank $k$ matrices by choice, their
columns span a $k$-dimensional subspace $\Wb$ in $\C^n$, it's just
that they have a special orientation in the sense just described. 
We simply need to choose a better representation. Given a 
multi-index $\Sb=\{i_1,\ldots, i_k\}\subset \{1,\ldots,n\}$ of
cardinality $k$, let $\Vb_0(\Sb)$ denote the subspace given by 
$\text{span}\{\mathrm{e}_{i_1},\ldots,\mathrm{e}_{i_k}\}$.
The vectors $\{\mathrm{e}_{i}\}_{i\in\Sbc}$ span the subspace 
$\Vb_0^\perp(\Sb)$, the $(n-k)$-dimensional subspace of $\C^n$ 
orthogonal to $\Vb_0(\Sb)$. Since $W$ has rank $k$, there exists
a multi-index $\Sb$ such that the projection $\mathrm{pr}\colon\Wb\to\Vb_0(\Sb)$
exists. The arguments above apply with $\Vb_0(\Sb)$ replacing 
$\Vb_0=\Vb_0(\{1,\ldots,k\})$. The projections 
$\mathrm{pr}\colon\Wb\to\Vb_0(\Sb)$ and $\mathrm{pr}\colon\Wb\to\Vb_0^\perp(\Sb)$ 
respectively give 
\begin{equation*}
W^\parallel_{\Sb}=\begin{pmatrix}
W_{\Sb}\\
O_{\Sbc}
\end{pmatrix}
\qquad\text{and}\qquad
W^\perp_{\Sb}=\begin{pmatrix}
O_{\Sb}\\
W_{\Sbc}
\end{pmatrix}.
\end{equation*}
Here $W_{\Sb}$ represents the $k\times k$ matrix consisting of the
$\Sb$ rows of $W$ and so forth, and, for example, the form for $W^\parallel_{\Sb}$
shown is meant to represent the $n\times k$ matrix whose $\Sb$ rows
are occupied by $W_{\Sb}$ while the remaining rows contain zeros.
We can perform a rank $k$ transformation of coordinates 
$\Vb_0(\Sb)\to\Vb_0(\Sb)$ via $W_{\Sb}^{-1}\in\GL(\C^k)$
under which the matrix $W$ becomes 
\begin{equation}\label{eq:coordpatch}
\begin{pmatrix}
I_{\Sb}\\
G_{\Sbc}
\end{pmatrix}.
\end{equation}
Thus $G_{\Sbc}$ parameterizes all $k$-dimensional
subspaces $\Wb$ that can be projected onto $\Vb_0(\Sb)$.
Each possible choice of $\Sb$ generates a coordinate patch 
of the Grassmann manifold $\Gr(\C^n,\C^k)$.
For more details on establishing $\Gr(\C^n,\C^k)$ as a compact and connected
manifold, see Griffiths and Harris~\cite[p.~193-4]{GH}. 

Let us now consider the infinite dimensional extension to 
Fredholm Grassmann manifolds. They are also known as Sato Grassmannians,
Segal--Wilson Grassmannians, Hilbert--Schmidt Grassmannians and
restricted Grassmannians, as well as simply Hilbert Grassmannians.
See Sato~\cite{SatoI,SatoII}, Miwa, Jimbo and Date~\cite{MJD},
Segal and Wilson~\cite[Section~2]{SW}) and Pressley and Segal~\cite[Chapters~6,7]{PS}
for more details. In the infinite dimensional setting we suppose the 
underlying vector space is a separable Hilbert space $\Hb=\Hb(\C)$. 
Any separable Hilbert space is isomorphic to the sequence space $\ell^2=\ell^2(\C)$
of square summable complex sequences; see Reed and Simon~\cite[p.~47]{RS}. 
We will parameterize the $\C$-valued components of the sequences 
in $\ell^2=\ell^2(\C)$ by $\Nb$. 
This is sufficient as any sequence space $\ell^2=\ell^2(\Fb;\C)$, 
where $\Fb$ denotes a countable field isomorphic to $\Nb$ that
parameterizes the sequences therein, is isomorphic to $\ell^2=\ell^2(\Nb;\C)$. 
We recall any $\af\in\ell^2(\C)$ has the form 
$\af=\{\af(1), \af(2), \af(3),\ldots\}$ where $\af(n)\in\C$ for each $n\in\Nb$. 
Hereafter we represent such sequences by column vectors
$\af=(\af(1), \af(2), \af(3), \ldots)^{\mathrm{\tiny T}}$. Since we require 
square summability, we must have $\af^\dag \af=\sum_{n\in\Nb}\af^\ast(n)\,\af(n)<\infty$, 
where $\dag$ denotes complex conjugate transpose and $\ast$ denotes
complex conjugate only. We define the inner product 
$\la\,\cdot\,,\,\cdot\,\ra\colon\ell^2(\C)\otimes\ell^2(\C)\to\R$ by 
$\la \af, \mathfrak b\ra\coloneqq \sqrt{\af^\dag\mathfrak b}$ 
for any $\af, \mathfrak b\in\ell^2(\C)$. 
A natural complete orthonormal basis for $\ell^2(\C)$ 
is the \emph{canonical basis} $\{\ef_n\}_{n\in\Nb}$ where $\ef_n$
is the sequence whose $n$th component is one and all other components are zero.
We have the following corresponding definition for the Grassmannian
of all subspaces comparable in size to a given closed subspace 
$\Vb\subset\Hb$; see Segal and Wilson~\cite{SW} and Pressley and Segal~\cite{PS}.
\begin{definition}[Fredholm Grassmannian]\label{def:FredholmGrassmannian}
Let $\Hb$ be a separable Hilbert space with a given decomposition
$\Hb=\Vb\oplus\Vb^\perp$, where $\Vb$ and $\Vb^\perp$ are infinite
dimensional closed subspaces. The Grassmannian $\Gr(\Hb,\Vb)$
is the set of all subspaces $W$ of $\Hb$ such that:
\begin{enumerate}
\item The orthogonal projection $\mathrm{pr}\colon W\to\Vb$ is 
a Fredholm operator, indeed it is a Hilbert--Schmidt perturbation
of the identity; and
\item The orthogonal projection $\mathrm{pr}\colon W\to\Vb^\perp$ 
is a Hilbert--Schmidt operator.
\end{enumerate}
\end{definition}
Herein we exclusively assume that our underlying separable Hilbert
space $\Hb$ and closed subspace $\Vb$ are of the form
\begin{equation*}
\Hb\coloneqq\ell^2(\C)\times\ell^2_{\mathrm{d}}(\C)
\qquad\text{and}\qquad
\Vb\coloneqq\ell^2(\C),
\end{equation*}
where $\ell^2_{\mathrm{d}}(\C)$ is a closed subspace 
of $\ell^2(\C)$. We thus assume a special form for $\Hb$. 
This form is the setting for our applications discussed in our Introduction.
We use it to motivate the definition of the Fredholm Grassmannian 
above and its relation to our applications. Suppose we are given a set 
of independent sequences in $\ell^2(\C)\times\ell^2_{\mathrm{d}}(\C)$ 
which span $\ell^2(\C)$ and we 
record them as columns in the infinite matrix 
\begin{equation*}
W=\begin{pmatrix}
Q\\
P
\end{pmatrix}.
\end{equation*}
Here each column of $Q$ lies in $\ell^2(\C)$ 
and each column of $P$ lies in $\ell^2_{\mathrm{d}}(\C)$.
We denote by $\Wb$ the subspace of 
$\ell^2(\C)\times\ell^2_{\mathrm{d}}(\C)$ spanned by 
the columns of $W$. 
Let us denote by $\Vb_0$ the \emph{canonical subspace} 
which has the corresponding representation 
\begin{equation*}
W_0=\begin{pmatrix}
\id\\
O
\end{pmatrix},
\end{equation*}
where $\id=\id_{\ell^2(\C)}$. 
As above, suppose we are able to project $\Wb$ on $\Vb_0$. 
The projections $\mathrm{pr}\colon\Wb\to\Vb_0$ and 
$\mathrm{pr}\colon\Wb\to\Vb_0^\perp$ respectively give 
\begin{equation*}
W^\parallel=\begin{pmatrix}
Q\\
O
\end{pmatrix}
\qquad\text{and}\qquad
W^\perp=\begin{pmatrix}
O\\
P
\end{pmatrix}.
\end{equation*}
The existence of this projection presupposes that the determinant
of the upper block $Q$ is non-zero. We must now choose in which
sense we want this to hold. The columns of $Q$ are $\ell^2(\C)$-valued. 
We now retrospectively assume that we constructed $Q$ so that,
not only do its columns span $\ell^2(\C)$, it 
is also a Fredholm operator on $\ell^2(\C)$ of the 
form $Q=\id+Q^\prime$ where $Q^\prime\in\Jf_2\bigl(\ell^2(\C);\ell^2(\C)\bigr)$ 
and $\id=\id_{\ell^2(\C)}$. Here $\Jf_2\bigl(\ell^2(\C);\ell^2(\C)\bigr)$
is the class of Hilbert--Schmidt operators from $\ell^2(\C)\to\ell^2(\C)$,
equipped with the norm 
\begin{equation*}
\|Q^\prime\|_{\mathfrak J_2(\ell^2(\C);\ell^2(\C))}^2
\coloneqq\mathrm{tr}\,\bigl(Q^\prime\bigr)^\dag\bigl(Q^\prime\bigr),
\end{equation*}
where `$\mathrm{tr}$' represents the trace operator. 
For such Hilbert--Schmidt operators $Q^\prime$ we can define the
regularized Fredholm determinant 
\begin{equation*}
\mathrm{det}_2\bigl(\id+Q^\prime\bigr)
\coloneqq\exp\Biggl(\sum_{\ell\geqslant2}
\frac{(-1)^{\ell-1}}{\ell}\mathrm{tr}\,(Q^\prime)^\ell\Biggr).
\end{equation*}
The operator $Q=\id+Q^\prime$ is invertible if and only if 
$\mathrm{det}_2\bigl(\id+Q^\prime\bigr)\neq0$. For more
details see Simon~\cite{Simon:Traces}. Hence, assuming
that $Q^\prime\in\Jf_2\bigl(\ell^2(\C);\ell^2(\C)\bigr)$,
we can assert that the subspace given by the span of the 
columns of $W^\parallel$ coincides with the subspace spanned 
by $W_0$, i.e.\/ with $\Vb_0$. Indeed the transformation given by 
$Q^{-1}\in\GL\bigl(\ell^2(\C)\bigr)$ transforms $W^\parallel$
to $W_0$. Let us now focus on $W^\perp$. We now also
retrospectively assume that we constructed $P$ so that,
not only do its columns span $\ell^2_{\mathrm{d}}(\C)$, it is
a Hilbert--Schmidt operator from $\ell^2(\C)$ to
$\ell^2_{\mathrm{d}}(\C)$, i.e.\/ 
$P\in\Jf_2\bigl(\ell^2(\C);\ell^2_{\mathrm{d}}(\C)\bigr)$. 
Hence under the transformation of coordinates 
$Q^{-1}\in\GL\bigl(\ell^2(\C)\bigr)$ 
the matrix for $W$ becomes
\begin{equation*}
\begin{pmatrix}
\id\\
G
\end{pmatrix},
\end{equation*}
where $G\coloneqq PQ^{-1}$. Thus any subspace $\Wb$ that can be 
projected onto $\Vb_0$ can be represented in this way, and conversely.
The operator $G\in\Jf_2\bigl(\ell^2(\C);\ell^2_{\mathrm{d}}(\C)\bigr)$ 
thus parameterizes all subspaces $\Wb$ that can be projected onto $\Vb_0$.
We call the Fredholm index of the Fredholm operator $Q$ the 
\emph{virtual dimension} of $W$; see see Segal and Wilson~\cite{SW} 
and Pressley and Segal~\cite{PS} for more details.
\begin{remark}[Canonical coordinate patch]\label{rmk:canoncoordpatch}
In our applications we consider evolutionary flows in which
the operators $Q=Q(t)$ and $P=P(t)$ above evolve, as functions 
of time $t\geqslant0$, as solutions to linear differential equations. 
The initial data in all cases is taken to be $Q(0)=\id$ and 
$P(0)=G_0$ for some given data 
$G_0\in\Jf_2\bigl(\ell^2(\C);\ell^2_{\mathrm{d}}(\C)\bigr)$.
By assumption in general and by demonstration in practice, 
the flows are well-posed and smooth in time for $t\in[0,T]$ 
for some $T>0$. Hence there exists a time $T>0$
such that for $t\in[0,T]$ we know, by continuity, that $Q=Q(t)$
is an invertible Hilbert-Schmidt operator of virtual dimension zero and
of the form $Q(t)=\id+Q^\prime(t)$ where 
$Q^\prime(t)\in\Jf_2\bigl(\ell^2(\C);\ell^2(\C)\bigr)$.
For this time the flow for $Q=Q(t)$ and $P=P(t)$ prescibes a flow
for $G=G(t)$, with $G(t)=P(t)Q^{-1}(t)$. In addition, for this time,
whilst the orientation of the subspace prescribed by 
$Q=Q(t)$ and $P=P(t)$ evolves, the flow 
remains within the same coordinate patch of the Grassmannian 
$\Gr(\Hb,\Vb)$ prescribed by the initial data
as just described and explicitly outlined above.
\end{remark}
\begin{remark}[Frames]
More details on ``frames'' in the infinite dimensional context
can be found in Christensen~\cite{Christensen} and Balazs~\cite{Balazs}. 
\end{remark}
There are three possible obstructions to the construction of the 
class of subspaces above as follows, the: (i) Virtual dimension of 
$\Wb$, i.e.\/ the Fredholm index of $Q$, may differ by an integer 
value; (ii) Operator $Q^\prime$ may not be Hilbert--Schmidt 
valued---it could belong to a `higher' Schatten--von Neumann class; or 
(iii) Determinant of $Q$ may be zero. The consequences of these 
issues for connected components, submanifolds and coordinate patches of
$\Gr(\Hb,\Vb)$ are covered in detail in general in Pressley
and Segal~\cite[Chap.~7]{PS}. These have important implications
for regularity of the flows mentioned in Remark~\ref{rmk:canoncoordpatch} above,
i.e.\/ for our applications. However we leave these questions
for further investigation, see Section~\ref{sec:conclu}. 
Suffice to say for the moment, from Pressley and Segal~\cite[Prop.~7.1.6]{PS},
we know that given any subspace $\Wb$ of $\Hb$ there exists a
representation analogous to the general coordinate patch 
form~\eqref{eq:coordpatch} with $\Sb$ a suitable countable set.
In other words there exists a subspace cover. More details
on infinite dimensional Grassmannians can be found in
Sato~\cite{SatoI,SatoII}, Abbondandolo and Majer~\cite{AM}
and Furitani~\cite{F}.

We have introduced the Fredholm Grassmannian here in the context
where the underlying Hilbert space is 
$\Hb=\ell^2(\C)\times\ell^2_{\mathrm{d}}(\C)$ and 
the subspace $\Vb\cong\ell^2(\C)$. In our applications
the context will be $\Hb=L^2(\Ib;\C)\times L_{\mathrm{d}}^2(\Ib;\C)$
and $\Vb\cong L^2(\Ib;\C)$ where the continuous interval 
$\Ib\subseteq\R$. We include here the cases when $\Ib$ is finite,
semi-infinite of the form $[a,\infty)$ for some real constant $a$
or the whole real line. As above, here $L_{\mathrm{d}}^2(\Ib;\C)$
denotes a closed subspace of $L^2(\Ib;\C)$---corresponding to
intersection of the domains of the unbounded operators $D$ and $B$
in our applications. All such spaces $L^2(\Ib;\C)$ are separable 
and isomorphic to $\ell^2(\C)$, and correspondingly for the closed
subspaces. See for example Christensen~\cite{Christensen} 
or Blanchard and Br\"uning~\cite{BB-H} for more details. 
It is straightforward to transfer statements we have made 
thusfar for the Fredholm Grassmannian
in the square-summable sequence space context across to the 
square integrable function space context. When 
$\Hb=L^2(\Ib;\C)\times L_{\mathrm{d}}^2(\Ib;\C)$
and $\Vb\cong L^2(\Ib;\C)$ the operators $Q^\prime$ 
and $P$ are Hilbert--Schmidt operators in the sense
that $Q^\prime\in\mathfrak J_2\bigl(L^2(\Ib;\C);L^2(\Ib;\C)\bigr)$ and 
$P\in\mathfrak J_2\bigl(L^2(\Ib;\C);L_{\mathrm{d}}^2(\Ib;\C)\bigr)$. 
By standard theory such Hilbert--Schmidt operators can be parameterized
via integral kernel functions, say, $q^\prime\in L^2(\Ib^2;\C)$ and 
$p\in L^2(\Ib;L^2_{\mathrm{d}}(\Ib;\C))$ and their actions represented by
\begin{align*}
Q^\prime(f)(x)&=\int_{\Ib}q^\prime(x,y)\,f(y)\,\rd y,\\
P(f)(x)&=\int_{\Ib}p(x,y)\,f(y)\,\rd y,
\end{align*}
for any $f\in L^2(\Ib;\C)$ and where $x\in\Ib$. Furthermore we know 
we have the isometries $\|Q^\prime\|_{\mathfrak J_2(L^2(\Ib;\C);L^2(\Ib;\C))}
=\|q^\prime\|_{L^2(\Ib^2;\C)}$
and $\|P\|_{\mathfrak J_2(L^2(\Ib;\C);L_{\mathrm{d}}^2(\Ib;\C))}
=\|p\|_{L^2(\Ib;L^2_{\mathrm{d}}(\Ib;\C))}$; 
see Reed and Simon~\cite[p.~210]{RS}. Hence the subspace
$\Wb$ above and its representation in the canonical coordinate
patch are given by
\begin{equation*}
W=\begin{pmatrix}
q\\
p
\end{pmatrix}
\quad 
\rightsquigarrow
\quad
\begin{pmatrix}
\delta\\
g
\end{pmatrix}.
\end{equation*}
Here we suppose $q(x,y)=\delta(x-y)+q'(x,y)$ with $\delta(x-y)$
representing the identity operator in $L^2(\Ib;\C)$ at the integral kernel level.
The function $g=g(x,y)$ is the $L^2(\Ib;L^2_{\mathrm{d}}(\Ib;\C))$-valued
kernel associated with the Hilbert--Schmidt operator $G$. It is 
explicitly obtained by solving the Fredholm equation given by
\begin{equation*}
p(x,y)=\int_\Ib g(x,z)\,q(z,y)\,\rd z.
\end{equation*}
Solving this equation for $g$ is equivalent to solving
the operator relation $P=G\,Q$ for $G$ by postcomposing by $Q^{-1}$.

\section{Fredholm Grassmannian flows}\label{sec:flows} 
We show how linear evolutionary flows on subspaces of an abstract separable 
Hilbert space $\Hb$ generate a quadratically nonlinear flow on a 
coordinate patch of an associated Fredholm Grassmann manifold. The
setting is similar to that outlined at the beginning of the last section. 
Assume for the moment that $\Hb$ admits a direct sum orthogonal decomposition 
$\Hb=\Vb\oplus\Vb^\perp$, where $\Vb$ and $\Vb^\perp$ are closed subspaces of $\Hb$. 
The subspace $\Vb$ is fixed. Now suppose there exists a time $T>0$ such that
for each time $t\in[0,T]$ there exists a continuous path of subspaces $\Wb=\Wb(t)$ 
of $\Hb$ such that the projections $\mathrm{pr}\colon\Wb(t)\to\Vb$ and 
$\mathrm{pr}\colon\Wb(t)\to\Vb^\perp$ can be respectively parameterised by
the operators $Q(t)=\id+Q^\prime(t)$ and $P(t)$. We in fact assume the path 
of subspaces $\Wb(t)$ is smooth in time and $Q^\prime(t)$ and $P(t)$ are 
Hilbert--Schmidt operators so that indeed 
$Q^\prime\in C^\infty\bigl([0,T];\Jf_2(\Vb;\Vb)\bigr)$ and
$P\in C^\infty\bigl([0,T];\mathrm{Dom}(D)\cap\mathrm{Dom}(B)\bigr)$.
Here $D$ and $B$ are in general unbounded operators for which, as 
we see presently, for each $t\in[0,T]$ we require $DP(t)\in\Jf_2(\Vb;\Vb^\perp)$ 
and $BP(t)\in\Jf_2(\Vb;\Vb)$. The subspaces  
$\mathrm{Dom}(D)\subseteq\Jf_2(\Vb;\Vb^\perp)$
and $\mathrm{Dom}(B)\subseteq\Jf_2(\Vb;\Vb)$ are their 
respective domains. Our analysis also involves two bounded
operators $A\in C^\infty\bigl([0,T];\Jf_2(\Vb;\Vb)\bigr)$
and $C\in C^\infty\bigl([0,T];\Jf_2(\Vb;\Vb^\perp)\bigr)$.
The evolution of $Q=Q(t)$ and $P=P(t)$ is prescribed by
the following system of differential equations.
\begin{definition}[Linear Base and Auxiliary Equations]
We assume there exists a $T>0$ such that, for the linear operators 
$A$, $B$, $C$ and $D$ described above, the linear operators
$Q^\prime\in C^\infty\bigl([0,T];\Jf_2(\Vb;\Vb)\bigr)$ and
$P\in C^\infty\bigl([0,T];\mathrm{Dom}(D)\cap\mathrm{Dom}(B)\bigr)$
satisfy the linear system of operator equations
\begin{align*}
\pa_tQ&=AQ+BP,\\
\pa_tP&=CQ+DP,
\end{align*}
where $Q=\id+Q^\prime$. We assume at time $t=0$ that $Q^\prime(0)=O$ 
so that $Q(0)=\id$ and $P(0)=P_0$ for some given 
$P_0\in\mathrm{Dom}(D)\cap\mathrm{Dom}(B)$. 
We call the evolution equation for $P=P(t)$ the \emph{base equation} 
and that for $Q=Q(t)$ the \emph{auxiliary equation}.
\end{definition}
\begin{remark}
The initial condition $Q(0)=\id$ and $P(0)=P_0$ means that the
corresponding subspace $\Wb(0)$ is represented in the canonical
coordinate chart of $\Gr(\Hb,\Vb)$. Hereafter we will assume
that for $t\in[0,T]$ the subspace $\Wb(t)$ is representable 
in the canonical coordinate chart and in particular that 
$\mathrm{det}_2Q(t)\neq0$.
\end{remark}
The base and auxiliary equations represent two essential 
ingredients in our prescription, which to be complete, 
requires a third crucial ingredient. This is to propose 
a relation between $P$ and $Q$ as follows.
\begin{definition}[Riccati Relation]
We assume there exists a $T>0$ such that for 
$P\in C^\infty\bigl([0,T];\mathrm{Dom}(D)\cap\mathrm{Dom}(B)\bigr)$
and $Q^\prime\in C^\infty\bigl([0,T];\Jf_2(\Vb;\Vb)\bigr)$
there exists a linear operator 
$G\in C^\infty\bigl([0,T];\mathrm{Dom}(D)\cap\mathrm{Dom}(B)\bigr)$
satisfying the linear Fredholm equation
\begin{equation*}
P=G\,Q,
\end{equation*}
where $Q=\id+Q^\prime$. We call this the \emph{Riccati Relation}.
\end{definition}
Given solution linear operators $P=P(t)$ and $Q=Q(t)$ to the linear base 
and auxiliary equations we can prove the existence of a suitable
solution $G=G(t)$ to the linear Fredholm equation constituting the 
Riccati relation. This result is proved in 
Beck \textit{et al.}~\cite{BDMStrans}. The result is as follows.
\begin{lemma}[Existence and Uniqueness: Riccati relation]\label{lemma:EandU}
Assume there exists a $T>0$ such that
$P\in C^{\infty}\bigl([0,T];\mathrm{Dom}(D)\cap\mathrm{Dom}(B)\bigr)$,
$Q^\prime\in C^{\infty}\bigl([0,T];\mathfrak J_2(\Vb;\Vb)\bigr)$
and $Q^\prime(0)=O$. Then there exists a $T^\prime>0$ with $T^\prime\leqslant T$ 
such that for $t\in[0,T^\prime]$ we have 
$\mathrm{det}_2\bigl(Q(t)\bigr)\neq0$ and
$\|Q^\prime(t)\|_{\mathfrak J_2(\Vb;\Vb)}<1$.
In particular, there exists a unique solution 
$G\in C^{\infty}\bigl([0,T^\prime];\mathrm{Dom}(D)\cap\mathrm{Dom}(B)\bigr)$
to the Riccati relation.
\end{lemma}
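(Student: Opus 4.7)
The plan is to first establish the invertibility of $Q(t)=\id+Q'(t)$ on a small time interval, then define $G(t)\coloneqq P(t)\,Q(t)^{-1}$, and finally verify smoothness, membership in $\mathrm{Dom}(D)\cap\mathrm{Dom}(B)$, and uniqueness. Since the base and auxiliary equations have already been used to guarantee smoothness of $P$ and $Q'$, the core issue is purely about the operator inversion $Q^{-1}$.

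First I would use the continuity (in fact $C^\infty$) hypothesis on $Q'\in C^\infty\bigl([0,T];\mathfrak J_2(\Vb;\Vb)\bigr)$ together with $Q'(0)=O$. Since $t\mapsto\|Q'(t)\|_{\mathfrak J_2(\Vb;\Vb)}$ is continuous and vanishes at $t=0$, there is some $T'\leqslant T$ on which $\|Q'(t)\|_{\mathfrak J_2(\Vb;\Vb)}<1$. Because the Hilbert--Schmidt norm dominates the operator norm, the Neumann series $\sum_{k\geqslant 0}(-Q'(t))^k$ converges in $\mathcal B(\Vb;\Vb)$ and produces $Q(t)^{-1}=\id+R(t)$ with $R(t)\in\mathfrak J_2(\Vb;\Vb)$ (it is a sum of products in the Hilbert--Schmidt ideal). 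For the $\mathrm{det}_2$ condition I would appeal to the continuity of the regularized Fredholm determinant in the Hilbert--Schmidt norm, together with $\mathrm{det}_2(\id)=1$, to obtain $\mathrm{det}_2\bigl(Q(t)\bigr)\neq0$ on $[0,T']$ (possibly shrinking $T'$); invertibility of $Q(t)$ is equivalent to this non-vanishing, so the two conditions reinforce one another.

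With $Q^{-1}$ in hand, I would define $G\coloneqq PQ^{-1}=P+PR$. Uniqueness is immediate: if $GQ=G^\prime Q$ then post-composing with $Q^{-1}$ yields $G=G^\prime$. Smoothness $G\in C^\infty\bigl([0,T'];\,\cdot\,\bigr)$ follows by differentiating the identity $Q^{-1}Q=\id$, giving $\pa_tQ^{-1}=-Q^{-1}(\pa_tQ)Q^{-1}$, and iterating: each derivative of $Q^{-1}$ is a polynomial expression in $Q^{-1}$ and time derivatives of $Q'$, hence smooth in $\mathfrak J_2(\Vb;\Vb)$. Composing with the smooth factor $P$ preserves smoothness.

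The main obstacle is showing that $G(t)\in\mathrm{Dom}(D)\cap\mathrm{Dom}(B)$ for every $t\in[0,T']$, since $D$ and $B$ are in general unbounded. The decomposition $G=P+PR$ reduces this to showing $PR\in\mathrm{Dom}(D)\cap\mathrm{Dom}(B)$; here one uses that $R$ is Hilbert--Schmidt and that the Neumann series converges not only in operator norm but, by the way $P$ enters through the base equation, in a sense compatible with the domain structure encoded in the hypotheses on $DP$ and $BP$. This is precisely where the hypotheses $DP\in\Jf_2(\Vb;\Vb^\perp)$ and $BP\in\Jf_2(\Vb;\Vb)$ made before the lemma statement do the work, and this is the step for which one would invoke the detailed argument of Beck \textit{et al.}~\cite{BDMStrans}.
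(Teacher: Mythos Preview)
Your proposal is correct and follows essentially the same approach as the paper: the paper does not give a full proof but refers to Beck \textit{et al.}~\cite{BDMStrans} and sketches precisely your idea---use smoothness of $Q'$ with $Q'(0)=O$ and $\mathrm{det}_2(Q(0))=1$ to get $\mathrm{det}_2(Q(t))\neq0$ and $\|Q'(t)\|_{\mathfrak J_2}$ small for short time, then bound $G(t)=P(t)Q^{-1}(t)$. Your Neumann-series construction, smoothness argument via differentiating $Q^{-1}Q=\id$, and deferral of the domain-membership step to \cite{BDMStrans} all match the paper's treatment.
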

The proof utilizes the fact that we assume the solutions
$P(t)\in\mathrm{Dom}(D)\cap\mathrm{Dom}(B)$ and $Q(t)\in\mathfrak J_2(\Vb;\Vb)$ 
are smooth in time and at time $t=0$ we have $\mathrm{det}_2\bigl(Q(0)\bigr)=1$
and $\|Q^\prime(0)\|_{\mathfrak J_2(\Vb;\Vb)}=0$. Hence for a short time 
we are guaranteed that $\mathrm{det}_2\bigl(Q(t)\bigr)$ is non-zero and 
$\|Q^\prime(t)\|_{\mathfrak J_2(\Vb;\Vb)}$ is sufficiently small to provide
suitable bounds on $G(t)=P(t)Q^{-1}(t)$. Our main result now is 
that the solution $G=G(t)$ to the Riccati relation satisfies a 
quadratically nonlinear evolution equation as follows.
\begin{theorem}[Riccati evolution equation]\label{thm:main}
Suppose we are given the initial data $G_0\in\mathrm{Dom}(D)\cap\mathrm{Dom}(B)$ 
and that $Q^\prime(0)=O$ and $P(0)=G_0$. Assume for some $T>0$ that 
$P\in C^{\infty}\bigl([0,T];\mathrm{Dom}(D)\cap\mathrm{Dom}(B)\bigr)$,
$Q^\prime\in C^{\infty}\bigl([0,T];\mathfrak J_2(\Vb;\Vb)\bigr)$
satisfy the linear base and auxiliary equations and that 
$G\in C^{\infty}\bigl([0,T];\mathrm{Dom}(D)\cap\mathrm{Dom}(B)\bigr)$
solves the Riccati relation. Then this solution $G$ to the Riccati
relation necessarily satisfies $G(0)=G_0$ and for $t\in[0,T]$ solves 
the Riccati evolution equation 
\begin{equation*}
\pa_tG=C+DG-G\,(A+BG).
\end{equation*}
\end{theorem}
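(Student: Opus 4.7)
The plan is to differentiate the Riccati relation $P=GQ$ in $t$ using the product rule and then eliminate $\partial_t P$ and $\partial_t Q$ via the base and auxiliary equations. Eliminating $P$ on the right-hand side via the Riccati relation itself and post-composing with $Q^{-1}$ should isolate $\partial_t G$ and produce precisely the claimed Riccati evolution equation. The initial condition follows by evaluating the Riccati relation at $t=0$.

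Concretely, I would first differentiate $P=GQ$ to obtain
\begin{equation*}
\partial_t P=(\partial_t G)\,Q+G\,(\partial_t Q).
\end{equation*}
Substituting $\partial_t P=CQ+DP$ from the base equation on the left and $\partial_t Q=AQ+BP$ from the auxiliary equation on the right gives $CQ+DP=(\partial_t G)\,Q+G(AQ+BP)$. I would then replace both occurrences of $P$ using $P=GQ$ to get
\begin{equation*}
CQ+DGQ=(\partial_t G)\,Q+GAQ+GBGQ.
\end{equation*}
Post-composing by $Q^{-1}$, which exists by Lemma~\ref{lemma:EandU} after possibly shrinking $T$, yields $C+DG=\partial_t G+GA+GBG$, which rearranges to $\partial_t G=C+DG-G(A+BG)$ as required. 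For the initial data, since $Q(0)=\id$ the Riccati relation evaluated at $t=0$ reads $G_0=P(0)=G(0)\cdot\id=G(0)$.

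The algebra is essentially the classical matrix-Riccati derivation, so the main obstacle is not computational but operator-theoretic: I need to verify at each step that every composition is well-defined between the appropriate spaces. The products $DP$ and $BP$ are legitimate because $P(t)\in\mathrm{Dom}(D)\cap\mathrm{Dom}(B)$ by hypothesis; similarly $DG$ and $BG$ are legitimate because the Riccati relation places $G(t)$ in the same intersection of domains. The smoothness of $G$ in $t$, needed to make $\partial_t G$ meaningful and to justify differentiating $P=GQ$ via the product rule, is furnished by Lemma~\ref{lemma:EandU}, and the same lemma supplies the invertibility of $Q(t)$ that legitimizes the final post-composition by $Q^{-1}$. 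With these points dispatched, the identity derived above is an equality of Hilbert--Schmidt-valued maps on $[0,T]$, completing the proof.
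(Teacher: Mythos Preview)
Your proposal is correct and follows essentially the same route as the paper: differentiate the Riccati relation $P=GQ$, substitute the base and auxiliary equations, replace $P$ by $GQ$, and post-compose with $Q^{-1}$. You add the explicit verification of $G(0)=G_0$ and some operator-theoretic bookkeeping that the paper leaves implicit, but the core argument is identical.
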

\begin{proof}
By direct computation, if we differentiate the Riccati
relation $P=G\,Q$ with respect to time using the product rule and 
use that $P$ and $Q$ satisfy the linear base and auxiliary equations
we find 
$(\pa_tG)\,Q=\pa_t P-G\,\pa_tQ=DP-G\,(AQ+BP)=(DG)\,Q-G\,(A+BG)\,Q$.
Postcomposing by $Q^{-1}$ establishes the result.\qed
\end{proof}
We now consider the abstract development above at the partial
differential equation level with an eye towards our applications. 
All our assumptions hitherto in this section apply here as well.
As hinted in our Introduction and indicated more explicitly 
at the end of Section~\ref{sec:Grassmannians}, suppose our
underlying separable Hilbert space is 
$\Hb=L^2(\Ib;\R)\times L_{\mathrm{d}}^2(\Ib;\R)$,
where the continuous interval $\Ib\subseteq\R$. 
The function space $L_{\mathrm{d}}^2(\Ib;\R)\subseteq L^2(\Ib;\R)$
is a subspace of $L^2(\Ib;\R)$ which we will explicitly 
define presently. We assume the closed subspace $\Vb$ of $\Hb$
to be $\Vb\cong L^2(\Ib;\R)$. By assumption we know
for some $T>0$ the operators $Q^\prime$ 
and $P$ are Hilbert--Schmidt operators with
$Q^\prime\in C^\infty\bigl([0,T];\mathfrak J_2\bigl(L^2(\Ib;\R);L^2(\Ib;\R)\bigr)\bigr)$
and
$P\in C^\infty\bigl([0,T];\mathfrak J_2\bigl(L^2(\Ib;\R);L_{\mathrm{d}}^2(\Ib;\R)\bigr)\bigr)$.
By standard theory the actions $Q^\prime$ and $P$ can
be represented by the integral kernel functions $q^\prime=q^\prime(x,y;t)$
and $p=p(x,y;t)$, respectively where
\begin{equation*}
q^\prime\in C^\infty\bigl([0,T];L^2(\Ib^2;\R)\bigr)
\qquad\text{and}\qquad
p\in C^\infty\bigl([0,T];L^2\bigl(\Ib;L^2_{\mathrm{d}}(\Ib;\R)\bigr)\bigr).
\end{equation*}
Henceforth we assume that the operator $B$ is multiplicative
corresponding to multiplcation by the smooth, bounded, square-integrable and
real-valued function $b=b(x)$. We also assume 
the operator $D$ is the unbounded operator $d=d(\pa_x)$ which is a polynomial   
of $\pa_x$ with constant real-valued coefficients. We can now specify
$L_{\mathrm{d}}^2(\Ib;\R)\subseteq L^2(\Ib;\R)$, it corresponds to
the domain of the operator $d=d(\pa_x)$. 
Also by assumption $A$ and $C$ are Hilbert--Schmidt valued operators and can thus be
represented by integral kernel functions $a=a(x,y;t)$ and $c=c(x,y;t)$,
respectively, where $a\in C^\infty\bigl([0,T];L^2(\Ib^2;\R)\bigr)$
and $c\in C^\infty\bigl([0,T];L^2\bigl(\Ib;L^2_{\mathrm{d}}(\Ib;\R)\bigr)\bigr)$
The linear base and auxiliary equations, since $Q(t)=\id+Q^\prime(t)$, 
thus have the form
\begin{subequations}\label{eq:linearbaseandauxpdes}
\begin{align}
\pa_t q^\prime(x,y;t)&=a(x,y;t)+\int_\Ib a(x,z;t)q^\prime(z,y;t)\,\rd z+b(x)\,p(x,y;t),\\
\pa_t p(x,y;t)&=c(x,y;t)+\int_\Ib c(x,z;t)q^\prime(z,y;t)\,\rd z+d(\pa_x)\,p(x,y;t).
\end{align}
\end{subequations}
\begin{remark}\label{rmk:diffdisp}
To be consistent with our assumptions on the properties of $P=P(t)$
and its corresponding integral kernel $p=p(x,y;t)$ for $t\in[0,T]$
as outlined above, we must suitably restrict the choice of the 
class of operator $d=d(\pa_x)$ appearing in the base equation.
In addition to the class properties outlined just above, we 
assume henceforth, and in particular for all our applications in 
Section~\ref{sec:examples}, that $d=d(\pa_x)$ is diffusive or dispersive
as a polynomial operator in $\pa_x$. Hence for example we could
assume that $d$ is a polynomial of only even degree terms in $\pa_x$
with the $2N$th degree term having a real coefficient of sign $(-1)^{N+1}$.
Alternatively for example $d$ could have a dispersive form such as $d=-\pa_x^3$.
\end{remark}
We are now in a position to prove our main result for
evolutionary partial differential equations with nonlocal
quadratic nonlinearities.
\begin{corollary}[Grassmannian evolution equation]\label{cor:main}
Given the initial data 
$g_0\in C^\infty\bigl(\Ib^2;\R\bigr)\cap L^2\bigl(\Ib;L^2_{\mathrm{d}}(\Ib;\R)\bigr)$
suppose $q^\prime=q^\prime(x,y;t)$ and $p=p(x,y;t)$ are the solutions
to the linear evolutionary base and auxiliary equations~\eqref{eq:linearbaseandauxpdes}
with $p(x,y;0)=g_0(x,y)$ and $q^\prime(x,y;0)=0$. 
Suppose the operator $d=d(\pa_x)$ is of the diffusive or 
dispersive form described in Remark~\ref{rmk:diffdisp}.
Then there is a $T>0$ such that the solution 
$g\in C^\infty\bigl([0,T];L^2\bigl(\Ib;L^2_{\mathrm{d}}(\Ib;\R)\bigr)\bigr)$
to the linear Fredholm equation
\begin{equation}\label{eq:Fredholmeq}
p(x,y;t)=g(x,y;t)+\int_{\Ib}g(x,z;t)\,q^\prime(z,y;t)\,\rd z,
\end{equation}
solves the evolutionary partial differential equation
with quadratic nonlocal nonlinearities of the form
\begin{equation*}
\pa_tg(x,y;t)=c(x,y;t)+d(\pa_x)\,g(x,y;t)
-\!\int_{\Ib}g(x,z;t)\bigl(a(z,y;t)+b(z)\,g(z,y;t)\bigr)\,\rd z.
\end{equation*}
\end{corollary}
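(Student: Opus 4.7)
The plan is to reduce the corollary to the operator-level Theorem~\ref{thm:main} via the kernel-to-operator correspondence established in Section~\ref{sec:Grassmannians}. First, I would reinterpret all the hypotheses at the operator level: the kernels $q^\prime(\cdot,\cdot;t)$ and $p(\cdot,\cdot;t)$ represent operators $Q^\prime(t),P(t)$, and the linear integro-differential equations~\eqref{eq:linearbaseandauxpdes} are precisely the kernel transcription of the abstract base and auxiliary equations; the initial conditions translate to $Q^\prime(0)=O$ and $P(0)=G_0$, where $G_0$ is the Hilbert--Schmidt operator with kernel $g_0$, which lies in $\mathrm{Dom}(D)\cap\mathrm{Dom}(B)$ by hypothesis on $g_0$. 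To invoke Theorem~\ref{thm:main} I need that $(Q^\prime,P)$ is a smooth curve in $\Jf_2(\Vb;\Vb)\times(\mathrm{Dom}(D)\cap\mathrm{Dom}(B))$, and this is where the diffusive/dispersive assumption of Remark~\ref{rmk:diffdisp} enters crucially: the semigroup generated by $d(\pa_x)$ preserves the Hilbert--Schmidt class (it is smoothing in the diffusive case and unitary in the dispersive case), so the variation-of-constants formula applied to the base equation provides the required smoothness of $P(t)$, and then the auxiliary equation, driven by the bounded $A$ and by $B$ acting multiplicatively on the smooth $P$, delivers the analogous regularity of $Q^\prime(t)$.

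Second, with these hypotheses secured, Lemma~\ref{lemma:EandU} supplies a time $T>0$ and a unique $G\in C^{\infty}\bigl([0,T];\mathrm{Dom}(D)\cap\mathrm{Dom}(B)\bigr)$ satisfying the operator Riccati relation $P=G\,Q$ with $Q=\id+Q^\prime$. At the kernel level this operator identity is exactly the linear Fredholm equation~\eqref{eq:Fredholmeq}, so the unique $G$ corresponds precisely to the hypothesized kernel $g$, and $G(0)=G_0$ follows from $P(0)=G_0$ and $Q(0)=\id$. Theorem~\ref{thm:main} now yields the abstract Riccati evolution $\pa_tG=C+DG-G(A+BG)$. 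Translating back to kernels, $\pa_tG$ and $C$ become $\pa_tg$ and $c$; the unbounded action $DG$ acts on the integral kernel in the $x$-variable as $d(\pa_x)g(x,y;t)$; and the compositions $G\,A$ and $G(BG)$ become, via the star product, the nonlocal integrals $\int_\Ib g(x,z;t)\,a(z,y;t)\,\rd z$ and $\int_\Ib g(x,z;t)\,b(z)\,g(z,y;t)\,\rd z$. Assembling these terms yields the stated nonlinear integro-differential equation.

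The main obstacle is the regularity step in the first paragraph, namely verifying that the Hilbert--Schmidt class on $\mathrm{Dom}(D)\cap\mathrm{Dom}(B)$ is preserved by the linear flow. The unbounded operator $d(\pa_x)$ is the delicate ingredient: without the diffusive or dispersive structure one could lose control of the Hilbert--Schmidt norm under the flow, which would invalidate both the existence of the Riccati operator $G$ in Lemma~\ref{lemma:EandU} and the interpretation of the abstract equation at the kernel level. Once this regularity is secured, everything else is essentially bookkeeping between operator compositions and star products of integral kernels.
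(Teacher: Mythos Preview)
Your proposal is correct and follows essentially the same route as the paper: invoke Lemma~\ref{lemma:EandU} for the existence of $G$ (equivalently $g$) via the Riccati relation, then apply Theorem~\ref{thm:main} at the operator level and transcribe the resulting Riccati evolution equation back to integral kernels. The paper's formal proof is in fact terser than yours on the regularity step---it simply refers to ``our assumptions on $q^\prime$ and $p$ outlined above''---and then supplements this with an explicit kernel-level verification (differentiating~\eqref{eq:Fredholmeq} in time and substituting the base and auxiliary equations), which is presented as instructive rather than essential.
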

\begin{proof}
That for some $T>0$ there exists a solution 
$g\in C^\infty\bigl([0,T];L^2\bigl(\Ib;L^2_{\mathrm{d}}(\Ib;\R)\bigr)\bigr)$
to the linear Fredholm equation~\eqref{eq:Fredholmeq}, 
i.e.\/ the Riccati relation, follows 
from Lemma~\ref{lemma:EandU} and our assumptions on $q^\prime=q^\prime(x,y;t)$
and $p=p(x,y;t)$ outlined above. That this solution $g$ also
solves the evolutionary partial differential equation
with quadratic nonlocal nonlinearity shown 
follows from Theorem~\ref{thm:main}. \qed
\end{proof}
It is instructive to see the proof of the second of the
results from Corollary~\ref{cor:main} at the integral kernel
level, i.e.\/ the proof that the solution $g$ to the 
linear Fredholm equation~\eqref{eq:Fredholmeq} also solves 
the evolutionary partial differential equation
with quadratic nonlocal nonlinearity shown. 
We present this here. First we differentiate the 
linear Fredholm equation~\eqref{eq:Fredholmeq}
with respect to time. This generates the relation 
\begin{equation*}                                                      
\pa_tg(x,y;t)+\int_{\Ib}\pa_tg(x,z;t)\, q^\prime(z,y;t)\,\rd z              
=\pa_tp(x,y;t)-\int_{\Ib}g(x,z;t)\,\pa_tq^\prime(z,y;t)\,\rd z.          
\end{equation*}                                         

Second we substitute for $\pa_tq^\prime$ and $\pa_tp$ using their evolution equations. 
Let us consider the first term on the right above. We find that           
\begin{align*}                
\pa_tp(x,y;t)
=&\;\int_{\Ib}c(x,z;t)\,\bigl(\delta(z-y)+q^\prime(z,y;t)\bigr)\,\rd z+d(\pa_x)p(x,y;t)\\     
=&\;\int_{\Ib}c(x,z;t)\,\bigl(\delta(z-y)+q^\prime(z,y;t)\bigr)\,\rd z\\
&\;+d(\pa_x)\biggl(g(x,y;t)+\int_{\Ib}g(x,z;t)\,q^\prime(z,y;t)\,\rd z \biggr)\\
=&\;\int_{\Ib}c(x,z;t)\,\bigl(\delta(z-y)+q^\prime(z,y;t)\bigr)\,\rd z\\
&\;+d(\pa_x)\int_{\Ib}g(x,z;t)\,\bigl(\delta(z-y)+q^\prime(z,y;t)\bigr)\,\rd z\\
=&\;\int_{\Ib}\bigl(c(x,z;t)
+d(\pa_x)g(x,z;t)\bigr)\bigl(\delta(z-y)+q^\prime(z,y;t)\bigr)\,\rd z.
\end{align*}                        
Now consider the second term on the right above.
We observe
\begin{align*}
\int_{\Ib}&g(x,z;t)\,\pa_tq^\prime(z,y;t)\,\rd z\\
=&\;\int_{\Ib}g(x,z;t)\,\biggl(
\int_{\Ib}a(z,\zeta;t)\,\bigl(\delta(\zeta-y)+q^\prime(\zeta,y;t)\bigr)\,\rd\zeta\biggr)\,\rd z\\
&\;+\int_{\Ib}g(x,z;t)\,\bigl(b(z)p(z,y;t)\bigr)\,\rd z\\
=&\;\int_{\Ib}g(x,z;t)\,\biggl(
\int_{\Ib}a(z,\zeta;t)\,\bigl(\delta(\zeta-y)+q^\prime(\zeta,y;t)\bigr)\,\rd\zeta\biggr)\,\rd z\\
&\;+\int_{\Ib}g(x,z;t)\,\biggl(b(z)
\int_{\Ib}g(z,\zeta;t)\,\bigl(\delta(\zeta-y)+q^\prime(\zeta,y;t)\bigr)\,\rd\zeta\biggr)\,\rd z\\
=&\;\int_{\Ib}\biggl(\int_{\Ib}g(x,\zeta;t)\bigl(a(\zeta,z;t)
+b(\zeta)g(\zeta,z;t)\bigr)\,\,\rd\zeta\biggr)
\bigl(\delta(z-y)+q^\prime(z,y;t)\bigr)\,\rd z.
\end{align*}
Putting these results together and post-composing by the operator 
$Q^{-1}$ generates the required result. Another way to enact this last
step is to postmultiply the final combined result by 
`$\delta(y-\eta)+\tilde q^\prime(y,\eta;t)$' for some $\eta\in\Ib$. 
This is the kernel associated with the inverse operator $Q^{-1}=\id+\tilde Q^\prime$
to $Q=\id+Q^\prime$. Then integrating over $y\in\Ib$ gives the result 
for $g=g(x,\eta;t)$.
\begin{remark}[Nonlocal nonlinearities with derivatives]
In the linear base and auxiliary equations~\eqref{eq:linearbaseandauxpdes}
we could take $b$ to be a constant coefficient polynomial of $\pa_x$.
With minor modifications the results we derive above still apply.
\end{remark}
We now need to demonstrate as a practical procedure, how 
linear evolutionary partial differential equations for $p$ and $q^\prime$ 
generate solutions to the evolutionary partial differential equation 
with quadratic nonlocal nonlinearities at hand. We show this explicitly
through two examples in the next section.

\section{Examples}\label{sec:examples}
We now consider some evolutionary partial differential equations
with nonlocal quadratic nonlinearities and explicitly show how
to generate solutions to them from the linear base and auxiliary equations
and linear Riccati relation. In both examples we take $\Ib\coloneqq\R$.
Note that throughout we define the Fourier transform for any given function $f=f(x)$ 
and its inverse as 
\begin{equation*}
\hat{f}(k)\coloneqq\int_\R f(x)\mathrm{e}^{2\pi\mathrm{i}kx}\,\rd x
\qquad\text{and}\qquad
f(x)\coloneqq\int_\R\hat{f}(k)\mathrm{e}^{-2\pi\mathrm{i}kx}\,\rd k.
\end{equation*}
\begin{example}[Nonlocal convolution nonlinearity]\label{ex:nonlocal}
In this case the target evolutionary partial differential equation
has a quadratic nonlinearity in the form of a convolution and is
given by
\begin{equation*}
\pa_tg=d\,g-g\star g,
\end{equation*}
where $d=d(\pa_x)$ and the $\star$ operation here does 
indeed represent convolution. In other words for this example we suppose
\begin{equation*}
\bigl(g\star g\bigr)(x;t)=\int_\R g(x-z;t)\,g(z;t)\,\rd z.
\end{equation*}
We assume smooth and square-integrable initial data $g_0=g_0(x)$.

To find solutions via our approach, we begin by assuming the 
kernel $g$ of the operator $G$ has the convolution form $g=g(x-y;t)$. 
We further assume the linear base and auxiliary equations have the form 
\begin{align*}
\pa_t p(x,y;t)&=d(\pa_x)\,p(x,y;t),\\
\pa_t q^\prime(x,y;t)&=b(x)\,p(x,y;t),
\end{align*}
with in fact $b\equiv 1$. In addition we suppose
$d=d(\pa_x)$ is of diffusive or dispersive form as described
in Remark~\ref{rmk:diffdisp}. 
In this case the Grassmannian evolution equation 
in Corollary~\ref{thm:main} has the form
\begin{equation*}
\pa_tg(x-y;t)=d(\pa_x)\,g(x-y;t)-\int_{\R}g(x-z;t)\,g(z-y;t)\,\rd z,
\end{equation*}
which by setting $y=0$ matches the system under consideration. 
We verify the sufficient conditions for Corollary~\ref{thm:main}
to apply presently. In Fourier space our example 
partial differential equation naturally takes the form
\begin{equation}\label{eq:nonlocal}
\pa_t\hat g=d(2\pi\mathrm{i}k)\,\hat g-\hat g^2.
\end{equation}

We generate solutions to the given partial differential equation for $g$ 
from the linear base and auxiliary equations, for the given initial data $g_0$, 
as follows. Note the base equation has the following equivalent form and solution 
in Fourier space:
\begin{equation*}
\pa_t\hat p(k,y;t)=d(2\pi\mathrm{i}k)\,\hat p(k,y;t) \quad\Leftrightarrow\quad
\hat p(k,y;t)=\mathrm{e}^{d(2\pi\mathrm{i}k)\,t}\,\hat p_0(k,y).
\end{equation*}
Here $\hat p_0$ is the Fourier transform of the initial data for $p$.
In Fourier space the auxiliary equation has the form and solution:
\begin{equation*}
\pa_t\hat q^\prime(k,y;t)=\hat p(k,y;t) \quad\Leftrightarrow\quad
\hat q^\prime(k,y;t)-\hat q_0^\prime(k,y)
=\frac{\mathrm{e}^{d(2\pi\mathrm{i}k)\,t}-1}{d(2\pi\mathrm{i}k)}\,\hat p_0(k,y).
\end{equation*}
Here $\hat q_0^\prime(k,y)$ is the Fourier transform of the initial data 
for $\hat q^\prime$. As per the general theory, 
we suppose $\hat q_0^\prime(k,y)=0$. 
This means if we set $t=0$ in the Riccati
relation we find 
\begin{equation*}
p_0(x,y)=g_0(x-y)\qquad\Leftrightarrow\qquad
\hat p_0(k,y)=\mathrm{e}^{2\pi\mathrm{i}ky}\,\hat g_0(k).
\end{equation*}
where $g_0$ is the initial data for the partial differential equation for $g$.
Hence explicitly we have 
\begin{equation*}
\hat p(k,y;t)=\mathrm{e}^{d(2\pi\mathrm{i}k)\,t}\,\mathrm{e}^{2\pi\mathrm{i}ky}\,\hat g_0(k)
\quad\text{and}\quad
\hat q^\prime(k,y;t)=\frac{\mathrm{e}^{d(2\pi\mathrm{i}k)\,t}-1}
{d(2\pi\mathrm{i}k)}\,\mathrm{e}^{2\pi\mathrm{i}ky}\,\hat g_0(k).
\end{equation*}
Note by taking the inverse Fourier transform, we deduce that 
$p=p(x-y;t)$ and $q^\prime=q^\prime(x-y;t)$. From these explicit forms
for their Fourier transforms, we deduce there exists
a $T>0$ such that on the time interval $[0,T]$ we know
$p$ and $q^\prime$ have the regularity required so that
Corollary~\ref{thm:main} applies.
Further, the Riccati relation in this case is 
\begin{align*}
&&p(x,y;t)&=g(x-y;t)+\int_\R g(x-z;t)\,q^\prime(z,y;t)\,\rd z\\
\qquad\Leftrightarrow\qquad&&
\hat p(k,y;t)&=\hat g(k;t)\bigl(\mathrm{e}^{2\pi\mathrm{i}ky}+\hat q^\prime(k,y;t)\bigr).
\end{align*}
Thus using the expressions for $\hat p$ and $\hat q^\prime$ above we find that
\begin{equation*}
\hat g(k;t)
=\frac{\mathrm{e}^{d(2\pi\mathrm{i}k)\,t}\,\hat g_0(k)}
{1+\Bigl(\bigl(\mathrm{e}^{d(2\pi\mathrm{i}k)\,t}-1\bigr)/d(2\pi\mathrm{i}k)\Bigr)\,\hat g_0(k)}.
\end{equation*}
Direct substitution into the Fourier form~\eqref{eq:nonlocal} 
of our example partial differential equation verifies it is 
indeed the solution for the initial data $g_0$.

In Figure~\ref{fig:nonlocal} we show the solution to the nonlocal
quadratically nonlinear partial differential equation above, for 
$d=\partial_x^2+1$ and a given generic initial profile $g_0$. 
The left panel shows the evolution of the 
solution profile computed using a direct integration approach. 
By this we mean we approximated $\partial_x^2$ by the central
difference formula and computed the nonlinear convolution by
computing the inverse Fourier transform of $\bigl(\hat g(k)\bigr)^2$.
We used the inbuilt Matlab integrator \texttt{ode23s} to integrate
in time. Similar direct integration could be achieved by integrating
the differential equation~\eqref{eq:nonlocal} for $\hat g$ using \texttt{ode23s} and then
computing the inverse Fourier transform. The right panel in  
Figure~\ref{fig:nonlocal} shows the solution evolution computed using
our Riccati approach. As expected, the solutions look
identical (up to numerical precision), even when we continue the 
solution past the time when the diffusion has reached the boundaries 
of the finite domain of integration in $x$, roughly
half way along the interval of evolution shown.  
\end{example}

\begin{figure}
  \begin{center}
  \includegraphics[width=5.5cm,height=5.5cm]{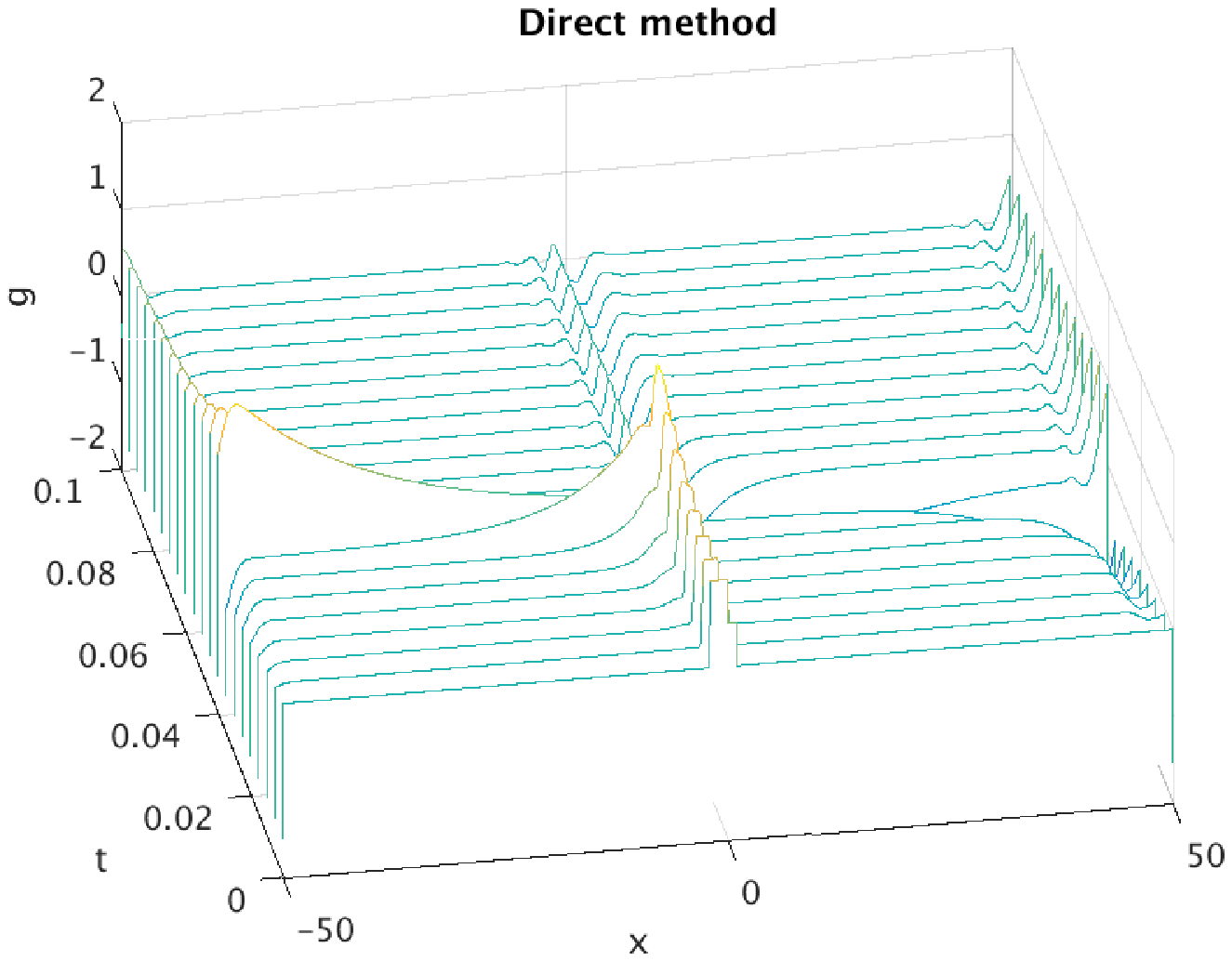}
  \includegraphics[width=5.5cm,height=5.5cm]{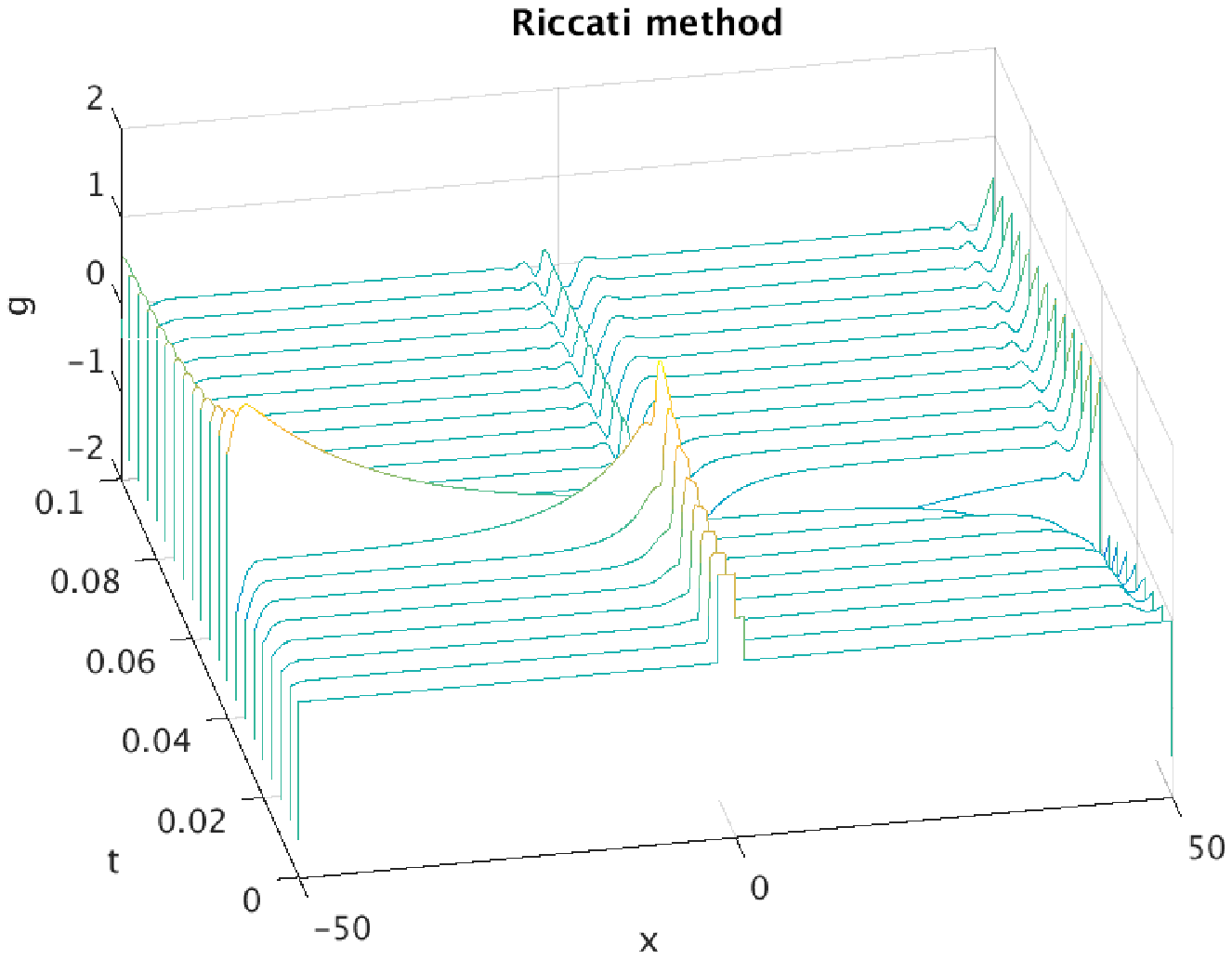}
  \end{center}
  \caption{We plot the solution to the nonlocal quadratically 
nonlinear partial differential equation from Example~\ref{ex:nonlocal}. 
We used a generic initial profile $g_0$ as shown. The left panel shows
the solution computed using a direct integration approach while the right
panel shows the solution computed using our Riccati approach.}
\label{fig:nonlocal}
\end{figure}

\begin{remark}[Multi-dimensions]\label{rmk:multi-d}  
This last example extends to the case where $x,y\in\R^n$ for any $n\geqslant1$ 
when $d$ is a scalar operator such as a power of the Laplacian, 
with $p$, $q^\prime$ and $g$ all scalar. 
\end{remark}

\begin{figure}
  \begin{center}
  \includegraphics[width=6cm,height=6cm]{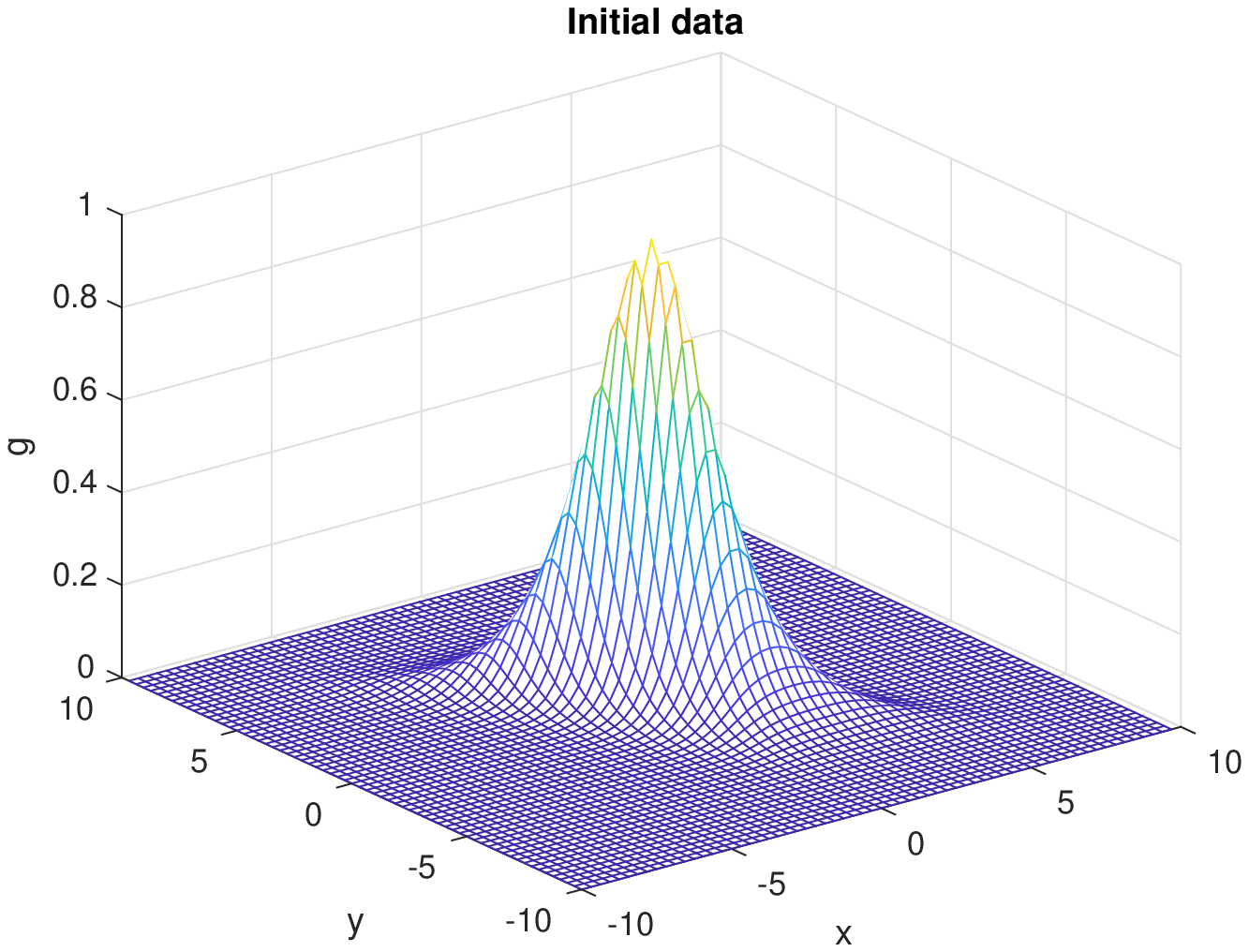}
  \includegraphics[width=6cm,height=6cm]{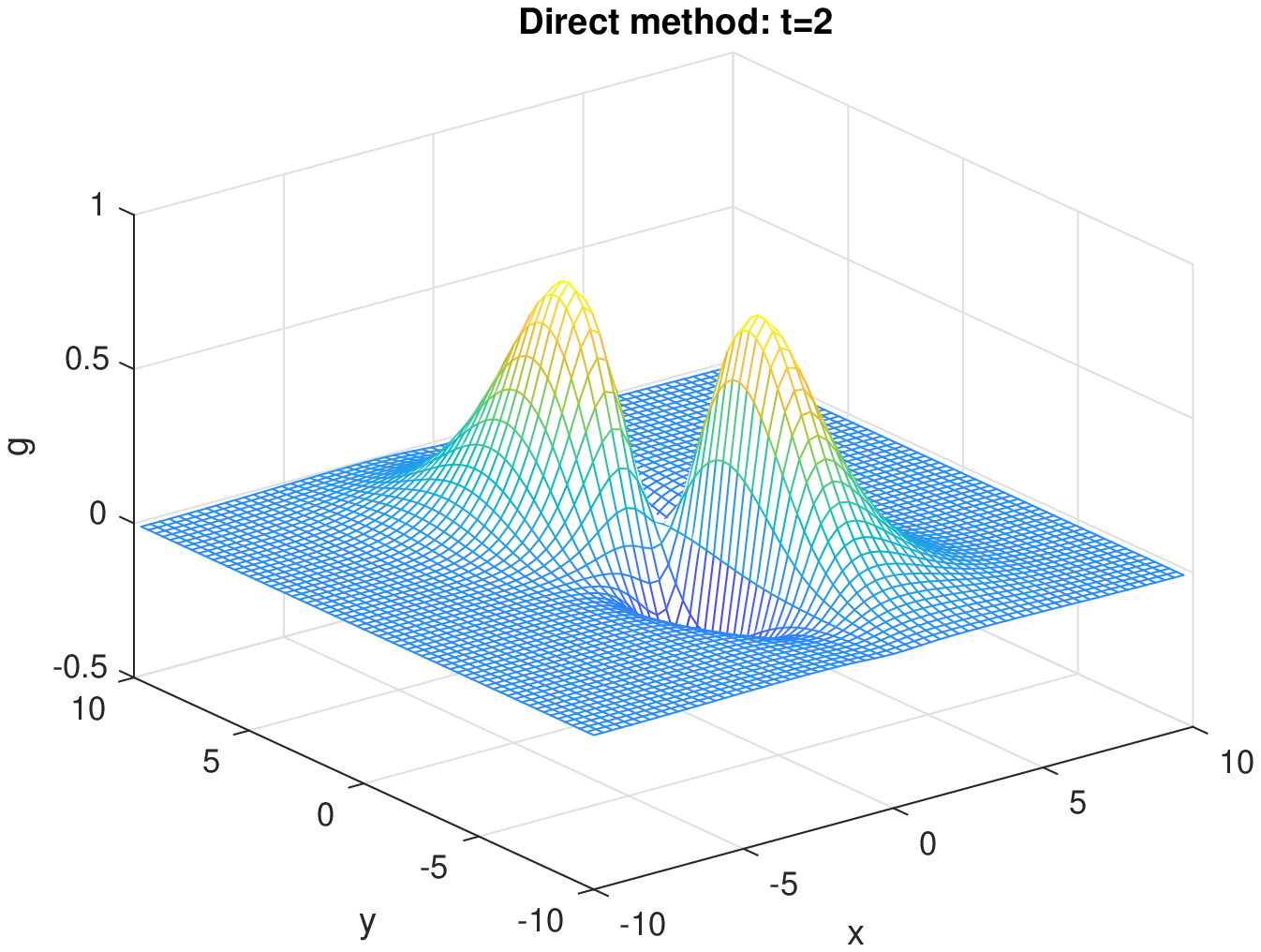}
  \includegraphics[width=6cm,height=6cm]{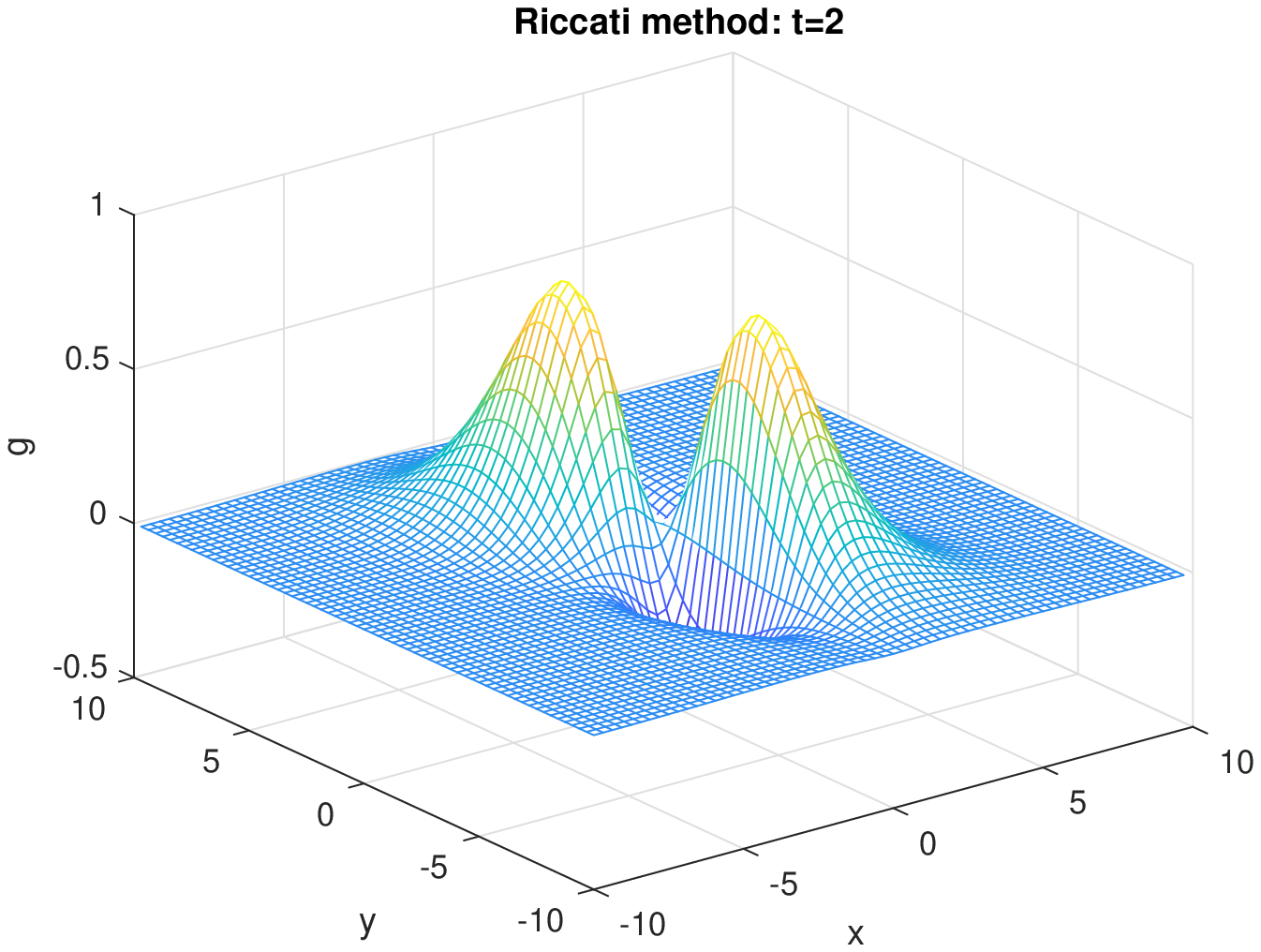}
  \end{center}
  \caption{We plot the solution to the nonlocal quadratically 
nonlinear partial differential equation with correlation from Example~\ref{ex:corr}. 
We used a generic initial profile $g_0$ as shown in the top panel. 
For time $t=2$, the middle panel shows the solution computed using 
a direct integration approach while the bottom
panel shows the solution computed using our Riccati approach.}
\label{fig:corr}
\end{figure}

\begin{example}[Nonlocal quadratic nonlinearity with correlation]\label{ex:corr}
In this case the target evolutionary partial differential equation 
has a nonlocal quadratic nonlinearity involving a correlation function 
and has the form
\begin{equation*}
\pa_tg(x,y;t)=d(\pa_x)\,g(x,y;t)-\int_{\R}g(x,z;t)\,b(z)\,g(z,y;t)\,\rd z.
\end{equation*}
This corresponds to the evolutionary partial differential equation 
with nonlocal quadratic nonlinearity in Corollary~\ref{thm:main},
with $a=c=0$ and $b=b(x)$ the scalar smooth, bounded square-integrable function
described in the paragraphs preceding it. We also assume that $d=d(\pa_x)$ 
is of the diffusive or dispersive form described in Remark~\ref{rmk:diffdisp}. 
We assume smooth and square-integrable initial data $g_0=g_0(x,y)$.

To find solutions to the evolutionary partial differential equation 
just above using our approach we assume 
the linear base and auxiliary equations have the form 
\begin{align*}
\pa_t p(x,y;t)&=d(\partial_x)\,p(x,y;t),\\
\pa_t q^\prime(x,y;t)&=b(x)\,p(x,y;t).
\end{align*}
In Fourier space the solution of the base equation has the form
\begin{equation*}
\hat p(k,y;t)=\mathrm{e}^{d(2\pi\mathrm{i}k)\,t}\,\hat p_0(k,y),
\end{equation*}
where $\hat p_0$ is the Fourier transform of the initial data for $p$.
The auxiliary equation solution in Fourier space has the form,
\begin{equation*}
\hat q^\prime(k,y;t)
=\int_\R\hat{b}(k-\kappa)\,\hat I(\kappa,t)\,\hat p_0(\kappa,y)\,\rd\kappa,
\end{equation*}
where we set
\begin{equation*}
\hat I(k,t)\coloneqq
\frac{\mathrm{e}^{d(2\pi\mathrm{i}k)\,t}-1}{d(2\pi\mathrm{i}k)}.
\end{equation*}
As in the last example we took the initial data for $q^\prime$ to be zero and thus
the initial data for $\hat q^\prime$ is also zero. We also set the 
initial data for $p=p(x,y;t)$ to be $p_0(x,y)=g_0(x,y)$.
In Fourier space this is equivalent to $\hat p_0(k,y)=\hat g_0(k,y)$.
We now derive an explicit form for $q^\prime=q^\prime(x,y;t)$ from 
$\hat q^\prime=\hat q^\prime(k,y;t)$ above. 
Taking the inverse Fourier transform of $\hat q^\prime$,
we find that
\begin{align*}
q^\prime(x,y;t)=&\;
\int_\R\biggl(\int_\R\mathrm{e}^{-2\pi\mathrm{i}k x}\,\hat{b}(k-\kappa)\,\rd k\biggr)
\hat I(\kappa,t)\,\hat g_0(\kappa,y)\,\rd\kappa\\
=&\;\int_\R\bigl(\mathrm{e}^{-2\pi\mathrm{i}\kappa x}\,b(x)\bigr)
\hat I(\kappa,t)\,\hat g_0(\kappa,y)\,\rd\kappa\\
=&\;b(x)\int_\R\mathrm{e}^{-2\pi\mathrm{i}\kappa x}\,
\hat I(\kappa,t)\,\hat g_0(\kappa,y)\,\rd\kappa\\
=&\;b(x)\int_\R I(x-\zeta,t)\, g_0(\zeta,y)\,\rd\zeta.
\end{align*}
Lastly, the Riccati relation here has the form
\begin{equation*}
p(x,y;t)=g(x,y;t)+\int_{\R}g(x,z;t)\,q^\prime(z,y;t)\,\rd z.
\end{equation*} 
Since we have an explicit expression for $q^\prime=q^\prime(x,y;t)$,
and we can obtain one for $p=p(x,y;t)$ by taking the inverse
Fourier transform of the explicit expression for $\hat p=\hat p(k,y;t)$
above, we can solve this linear Fredholm equation for $g=g(x,y;t)$.
The solution, by Corollary~\ref{thm:main}, will be the 
solution to the evolutionary partial differential equation
with the nonlocal quadratic nonlinearity above corresponding to
the initial data $g_0(x,y)$.

We solved the Fredholm equation for $g=g(x,y;t)$ numerically. 
The results are shown in Figure~\ref{fig:corr}. We set
the operator $d=\partial_x^2+1$ and took as the generic initial profile 
$g_0(x,y)\coloneqq \mathrm{sech}(x+y)\,\mathrm{sech}(y)$.
We set $b=b(x)$ to be a mean-zero Gaussian density function 
with standard deviation $0.01$. 
The top panel in Figure~\ref{fig:corr} shows the initial data.
The middle panel in the figure shows the solution profile computed 
at time $t=2$ using a direct spectral integration approach. By this
we mean we solved the equation for $\hat g(k,y;t)$ generated by taking the 
Fourier transform of the equation for $g=g(x,y;t)$. 
We used the inbuilt Matlab integrator \texttt{ode23s} to integrate
in time. The bottom panel in Figure~\ref{fig:corr} shows the solution 
computed with the time parameter $t=2$ using our Riccati approach, 
i.e.\/ by numerically solving the Fredholm equation for $g=g(x,y;t)$
above by standard methods for such integral equations. 
As expected, the solutions in the middle and bottom panels 
look identical (up to numerical precision).
\end{example}

\begin{remark} 
We emphasize that, when we can explicitly solve for 
$p=p(x,y;t)$ and $q^\prime=q^\prime(x,y;t)$ in our Riccati approach, then 
time $t$ plays the role of a parameter. One decides the time at 
which one wants to compute the solution and we then solve the Fredholm equation 
to generate the solution $g=g(x,y;t)$ for that time~$t$.
This is one of the advantages of our method over
standard numerical schemes\footnote{We quote
from the referee: ``numerical integration in time will usually become
inaccurate for large time $t$, but the nature of the exact solution gives
you a precise answer for arbitrary $t$, and maybe allows access to information
about long time behaviour which is inaccessible via standard numerical schemes.''}.
\end{remark}

\begin{remark}[Burgers' equation]\label{ex:Burgers}
Burgers' equation can be considered as a special case of our
Riccati approach in the following sense. Suppose the linear base 
and auxiliary equations are $\pa_tp(x;t)=\pa_x^2p(x;t)$ and 
$\pa_tq(x;t)=\pa_xp(x;t)$ for the real valued functions $p$
and $q$. Further suppose the Riccati relation takes the form
$p(x;t)=g(x;t)\,q(x;t)$ where $g$ is also real valued. 
Note this represents a rank one relation between $p$ and $q$
in the sense that we obtain $p$ from $q$ by a simple multiplication
of $q$ by the function $g$. From the linear base and auxiliary
equations, assuming smooth solutions, we deduce that 
$\pa_tq=\pa_xp=\pa_x^{-1}\pa_x^2p=\pa_x^{-1}\pa_tp=\pa_t(\pa_x^{-1}p)$, 
where $\pa_x^{-1}w$ represents the operation $\int_{-\infty}^xw(z)\,\rd z$
for any smooth integrable function $w=w(x)$ on $\R$. From the 
above equalities we deduce $p(x;t)=\pa_xq(x;t)+f(x)$ where $f=f(x)$
is an arbitrary function of $x$ only. If we take the special case $f\equiv0$,
then we deduce $p(x;t)=\pa_xq(x;t)$. This also implies $\pa_tq=\pa_x^2q$.
If we insert the relation $p(x;t)=\pa_xq(x;t)$ into the Riccati relation 
we find 
\begin{equation*}
g(x;t)=\frac{\pa_xq(x;t)}{q(x;t)}.
\end{equation*}
This is almost the Cole--Hopf transformation, its just missing the 
usual `-2' factor on the right-hand side. However carrying through our Riccati
approach by direct computation, differentiating the Riccati relation with
respect to time, we observe 
\begin{align*}
(\pa_tg)\,q&=\pa_tp-g\,\pa_tq\\
&=\pa_x^2p-g\,\pa_tq\\
&=\pa_x^2(g\,q)-g\,\pa_tq\\
&=(\pa_x^2g)\,q+2\,(\pa_xg)\,\pa_xq+g\,(\pa_x^2q)-g\,\pa_tq\\
&=(\pa_x^2g)\,q+2\,(\pa_xg)\,p\\
&=(\pa_x^2g)\,q+2\,(\pa_xg)\,g\,q.
\end{align*}
If we divide through by the function $q=q(x;t)$ we conclude that $g=g(x;t)$
satisfies the nonlinear partial differential equation
\begin{equation*}
\pa_tg=\pa_x^2g+2\,g\,\pa_xg.
\end{equation*}
However we now observe that `$-2\,g$' indeed satisfies Burgers' equation.
\end{remark}

\section{Conclusions}\label{sec:conclu}
There are many extensions of our approach to more general
nonlinear partial differential equations. One immediate extension
to consider is to multi-dimensions, i.e.\/ where the underlying
spatial domain lies in $\R^n$ for some $n\geqslant1$. This 
should be straightforward as indicated in Remark~\ref{rmk:multi-d}  
above. Another immediate extension is to systems of 
nonlinear partial differential equations with nonlocal nonlinearites. 
Indeed we explicitly consider this extension 
in Beck, Doikou, Malham and Stylianidis~\cite{BDMStrans} where we
demonstrate how to generate solutions to certain classes of
reaction-diffusion systems with nonlocal quadratic nonlinearities.
We also demonstrate therein, how to extend our approach to 
generate solutions to evolutionary partial differential equations
with higher degree nonlocal nonlinearities, including the nonlocal
nonlinear Schr\"odinger equation. Further therein, 
for arbitrary initial data $g_0=g_0(x)$, 
we use our Riccati approach to generate solutions to the 
nonlocal Fisher--Kolmogorov--Petrovskii--Piskunov equation
for scalar $g=g(x;t)$ of the form 
\begin{equation*}
\pa_tg(x;t)=d(\pa_x)\,g(x;t)-g(x;t)\int_\R g(z;t)\,\rd z.
\end{equation*}
This has recently received some attention; see Britton~\cite{Britton}
and Bian, Chen and Latos~\cite{BCL}. We would also like to consider
the extension of our approach to the full range of possible choices
of the operators $d$ and $b$ both as unbounded and bounded operators,
for example to fractional and nonlocal diffusion cases. We have already
considered the extension of our approach to evolutionary
stochastic partial differential equations with nonlocal nonlinearities
in Doikou, Malham and Wiese~\cite{DMW}. Therein we consider the separate
cases when the driving space-time Wiener field appears as a nonhomogeneous
additive source term or as a multiplicative but linear source term. 
Of course, another natural extension is to determine whether we
can include the generation of solutions to evolutionary partial
differential equations with local nonlinearities within the context
of our Riccati approach. One potential approach is to suppose
the Riccati relation is of Volterra type. This is an ongoing investigation. 
Lastly we remark that for the classes of 
nonlinear partial differential equations we can consider, 
solution singularities correspond to poor choices of coordinate patches
which are related to function space regularity. In principle solutions 
can be continued by changing coordinate patches; 
see Schiff and Shnider~\cite{SchiffShnider} 
and Ledoux \textit{et al.\/}~\cite{LMT}. This is achieved by 
pulling back the flow to the relevant general linear group and then
projecting down to a more appropriate coordinate patch of the Fredholm
Grassmannian. Alternatively, we could continue the flow in the 
appropriate general linear group via the base and auxiliary equations, 
and then monitor the relevant projection(s).


\begin{acknowledgement}
We are very grateful to the referee for their detailed report 
and suggestions that helped significantly improve the original manuscript.
We would like to thank Percy Deift, Kurusch Ebrahimi--Fard and 
Anke Wiese for their extremely helpful comments and suggestions.
The work of M.B. was partially supported by 
US National Science Foundation grant DMS-1411460.
\end{acknowledgement}

\end{document}